\newtheorem{thm}{Theorem}[section]
\newtheorem{prop}[thm]{Proposition}
\newtheorem{lemma}[thm]{Lemma}
\newtheorem{cor}[thm]{Corollary}
\newtheorem{conj}[thm]{Conjecture}
\theoremstyle{remark}
\newtheorem{example}[thm]{Example}
\newtheorem{remark}[thm]{Remark}
\newtheorem{defin}{Definition}
\def\C{\mathbb{C}}
\def\R{\mathbb{R}}
\def\Z{\mathbb{Z}}
\def\P{\mathbb{P}}
\def\I{\mathbb{I}}
\def\Pic{{\rm Pic}}
\def\Oc{\mathcal{O}}
\def\a{\alpha}
\def\l{\lambda}
\def\emptyset{\varnothing}
\title{Divided difference operators on polytopes}
\author{Valentina Kiritchenko}
\email{vkiritch@hse.ru}
\thanks{The author was supported by Dynasty foundation, AG Laboratory NRU HSE,
MESRF grants ag. 11.G34.31.0023, MK-983.2013.1 and by RFBR grants 12-01-31429-mol-a, 12-01-33101-mol-a-ved.
This study was carried out within ``The National Research
University Higher School of Economics'  Academic Fund Program in 2013-2014,
research grant No. 12-01-0194''}
\address{Laboratory of Algebraic Geometry and Faculty of Mathematics\\ Higher School of Economics\\
Vavilova St. 7, 112312 Moscow, Russia}
\address{Institute for Information Transmission Problems, Moscow, Russia}
\date{}
\keywords{Gelfand--Zetlin polytope, divided difference operator, Demazure character}
\begin{document}
\begin{abstract}
We define convex-geometric counterparts of divided difference (or Demazure) operators from the Schubert calculus and representation theory.
These operators are used to construct inductively polytopes that capture Demazure characters of representations of reductive groups.
In particular, Gelfand--Zetlin polytopes and twisted cubes of Grossberg--Karshon are obtained in a uniform way.
\end{abstract}

\maketitle

\section{Introduction}
Polytopes play a prominent role in representation theory and algebraic geometry.
In algebraic geometry, there are Okounkov convex bodies introduced by Kaveh--Khovanskii and Lazarsfeld--Mustata (see \cite{KKh} for the references).
These convex bodies turn out to be polytopes in many important cases (e.g. for spherical varieties).
In representation theory, there are string polytopes introduced by Berenstein--Zelevinsky and Littelmann \cite{BZ,L}.
String polytopes are associated with the irreducible representations of a reductive group $G$, namely, the integer points inside and at the boundary of a string polytope parameterize a canonical basis in the corresponding representation.
A classical example of a string polytope for $G=GL_n$ is a Gelfand--Zetlin polytope.

There is a close relationship between string polytopes and Okounkov bodies.
String polytopes were identified with Okounkov polytopes of flag varieties for a certain choice of a geometric valuation \cite{K} and were also used in \cite{KKh} to give a more explicit description of Okounkov bodies associated with actions of $G$ on algebraic varieties.
Natural generalizations of string polytopes are Okounkov polytopes of Bott--Samelson resolutions of Schubert varieties for various geometric valuations (an example of such a polytope is computed in \cite{Anderson}).

In this paper, we introduce an elementary convex-geometric construction that yields polytopes with the same properties as string polytopes and Okounkov polytopes of Bott--Samelson resolutions.
Namely, exponential sums over the integer points inside these  polytopes yield Demazure characters.
We start from a single point and apply a sequence of simple convex-geometric operators that mimic the well-known {\em divided difference} or {\em Demazure operators} from the Schubert calculus and representation theory.
Convex-geometric Demazure operators act on convex polytopes and take a polytope to a polytope of dimension one greater.
In particular, classical Gelfand--Zetlin polytopes can be obtained in this way (see Section \ref{ss.GL}).
More generally, these operators act on {\em convex chains}.
The latter were defined and studied in \cite{PKh} and used in \cite{PKh2} to prove a convex-geometric variant of the Riemann--Roch theorem.

When $G=GL_n$, convex-geometric Demazure  operators were implicitly used in \cite{KST} to calculate Demazure characters of Schubert varieties in terms of the exponential sums over unions of faces of Gelfand--Zetlin polytopes and to represent Schubert cycles by unions of faces.
A motivation for the present paper is to create a general framework for extending results of \cite{KST} on Schubert calculus from type $A$ to arbitrary reductive groups.
In particular, convex-geometric divided difference operators allow one to use in all types a geometric version of mitosis (mitosis on parallelepipeds) developed in \cite[Section 6]{KST}.
This might help to find an analog of mitosis of \cite{KnM} in other types.

Another motivation is to give a tool for describing inductively Okounkov polytopes of Bott--Samelson resolutions.
We describe polytopes that conjecturally coincide with Okounkov polytopes of Bott--Samelson resolutions for a natural choice of a geometric valuation (see Conjecture \ref{conj}).
Another application is an inductive description of Newton--Okounkov polytopes for line bundles on Bott towers (in particular, on toric degenerations of Bott--Samelson resolutions) that were first described by Grossberg and  Karshon \cite{GK} (see Section \ref{ss.Bott} and  Remark \ref{r.GK}).

This paper is organized as follows. 
In Section \ref{s.main}, we give background on convex chains and define convex-geometric divided difference operators. 
In Section \ref{s.Demazure}, we relate these operators with Demazure characters and their generalizations. 
In Section \ref{s.ag}, we outline possible applications to Okounkov polytopes of Bott towers and Bott--Samelson varieties.

I am grateful to Dave Anderson, Joel Kamnitzer, Kiumars Kaveh and Askold Khovanskii for useful discussions.

\section{Main construction} \label{s.main}

\subsection{String spaces and parapolytopes}
\begin{defin} A {\em string space} of {\em rank $r$} is a real vector space ${\mathbb R}^d$
together with a direct sum decomposition
$${\mathbb R}^d={\mathbb R}^{d_1}\oplus\ldots\oplus{\mathbb R}^{d_r} $$
and a collection of linear functions $l_1$, \ldots, $l_r\in({\mathbb R}^d)^*$
such that $l_i$ vanishes on ${\mathbb R}^{d_i}$.
\end{defin}

We choose coordinates in $\mathbb R^d$ such that they are compatible with the direct sum
decomposition.
The coordinates will be denoted by
$(x_1^1,\ldots,x_{d_1}^1;\ldots; x_1^r,\ldots, x_{d_r}^r)$
so that the summand $\R^{d_i}$ is given by vanishing of all coordinates except for $x_1^i$,\ldots, $x_{d_i}^i$.
In what follows, we regard $\R^d$ as an affine space.

Let $\mu=(\mu_1,\dots,\mu_{d_i})$ and $\nu=(\nu_1,\dots,\nu_{d_i})$ be two collections of real numbers such that $\mu_j\le\nu_j$ for all $j=1$,\dots, $d_i$.
By the {\em coordinate parallelepiped} $\Pi(\mu,\nu)\subset\R^{d_i} $ we mean the parallelepiped
$$
\Pi(\mu,\nu)=\{(x^i_1,\ldots,x^i_{d_i})\in\R^{d_i}\ | \ \mu_j\le x_j^{i}\le \nu_j, \ j=1,\dots, d_i\}.
$$

\begin{defin}
A convex polytope $P\subset{\mathbb R}^d$ is called a {\em parapolytope} if for $i=1$,\ldots, $r$, and a vector $c\in\R^d$ the intersection of $P$
with the parallel translate $c+{\mathbb R}^{d_i}$ of $\R^{d_i}$ is the parallel translate of a coordinate parallelepiped, i.e.,
$$P\cap(c+\R^{d_i})=c+\Pi(\mu_c,\nu_c)$$
for $\mu_c$ and $\nu_c$ that depend on $c$.
\end{defin}

For instance, if $d=r$ (i.e., $d_1=\ldots=d_r=1$) then every polytope is a
parapolytope.
Below is a less trivial example of a parapolytope in a string space.
\begin{example}\label{e.GZ} Consider the string space
$$\mathbb R^d=\mathbb R^{n-1}\oplus\mathbb R^{n-2}\oplus\ldots\oplus\mathbb R^1$$
of rank $r=(n-1)$ and dimension $d=\frac{n(n-1)}2$.

Let $\lambda=(\lambda_1,\ldots,\lambda_n)$ be a non-increasing collection of integers.
For each $\l$, define the {\em Gelfand--Zetlin polytope} $Q_\l$ by the inequalities
$$
\begin{array}{cccccccccc}
\l_1&       & \l_2    &         &\l_3          &    &\ldots    & &       &\l_n   \\
    &x^1_1&         &x^1_2  &         & \ldots   &       &  &x^1_{n-1}&       \\
    &       &x^2_1 &       &  \ldots &   &        &x^2_{n-2}&         &       \\
    &       &       &  \ddots   & &  \ddots   &      &         &         &       \\
    &       &       &  &x^{n-2}_1&     &  x^{n-2}_2 &        &         &       \\
    &       &         &    &     &x^{n-1}_1&   &              &         &       \\
\end{array}
$$
where the notation
$$
 \begin{array}{ccc}
  a &  &b \\
   & c &
 \end{array}
 $$
means $a\ge c\ge b$.
It is easy to check that $Q_\l$ is a parapolytope.
Indeed, consider a parallel translate of $\R^{n-i}$ by the vector $c=(c^1_1,\ldots,c^1_{n-1};\ldots;c^{n-1}_1)$.
Put $c^0_i=\l_i$ for $i=1$,\ldots, $n$.
The intersection of $Q_\l$  with $c+\R^{n-i}$ is given by the
the following inequalities:
$$
\begin{array}{ccccccccc}
c^{i-1}_1&   &c^{i-1}_2&   &c^{i-1}_3&       &\ldots&   &c^{i-1}_{n-i+1}\\
     &x^i_1&     &x^i_2&     &\ldots &      &x^i_{n-i}&         \\
     &   &c^{i+1}_1&   &\ldots&      &c^{i+1}_{n-i-1}&  &       \\
\end{array}.
$$
Therefore, the intersection can be identified with the coordinate parallelepiped
$c+\Pi(\mu,\nu)\subset c+\R^{n-i}$, where $\mu_j=\max(c^{i-1}_{j},c^{i+1}_{j-1})$ and $\nu_j=\min(c^{i-1}_{j+1},c^{i+1}_{j})$
(put $c^{i+1}_0=-\infty$ and $c^{i+1}_{n-i}=+\infty$).
\end{example}

\subsection{Polytopes and convex chains}
Consider the set of all convex polytopes in $\R^d$.
This set can be endowed with the structure of a commutative semigroup using {\em Minkowski sum}
$$
P_1+P_2=\{x_1+x_2\in\R^d\ |\ x_1\in P_1,\ x_2\in P_2\}
$$
It is not hard to check that this semigroup has cancelation property.
We can also multiply polytopes by positive real numbers using dilation:
$$
\lambda P=\{\lambda x\ |\ x\in P\},\quad \lambda\ge 0.
$$
Hence, we can embed the semigroup $S$ of convex polytopes into its Grothendieck group $V$,
which is a real (infinite-dimensional) vector space.
The elements of $V$ are called {\em virtual polytopes}.

It is easy to check that the set of parapolytopes in $\R^d$ is closed under Minkowski sum
and under dilations.
Hence, we can define the subspace $V_{\square}\subset V$ of {\em virtual parapolytopes} in the string space $\R^d$.

\begin{example} If $\R^d$ is a string space of rank $1$, i.e. $d_1=d$, then parapolytopes are coordinate parallelepipeds $\Pi(\mu,\nu)$.
Clearly,
$$\Pi(\mu,\nu)+\Pi(\mu',\nu')=\Pi(\mu+\mu',\nu+\nu').$$
Hence, virtual parapolytopes can be identified with the pairs of vectors $\mu,\nu\in \R^d$.
This yields an isomorphism $V_{\square}\simeq \R^d\oplus\R^d$.
Under this isomorphism, the semigroup of (true) coordinate parallelepipeds gets mapped to the convex cone in $\R^d\oplus\R^d$
given by the inequalities $\mu_i\le\nu_i$ for $i=1$,\ldots, $d$.
\end{example}

We now define the space $\tilde V$ of {\em convex chains} following \cite{PKh}.
A {\em convex chain} is a function on $\R^d$ that can be represented as a finite linear combination
$$\sum_P c_P {\I}_P,$$
where $c_P\in\R$, and ${\I}_P$ is the characteristic function of a convex polytope $P\subset\R^d$, that is,
$${\I}_P(x)=\left\{\begin{array}{cc}
                  1, & x\in P \\
                  0, & x\notin P
                \end{array}
\right. .$$
The semigroup $S$ of convex polytopes can be naturally embedded into $\tilde V$:
$$\iota:S\hookrightarrow\tilde V;\quad \iota:P\mapsto  {\I}_P$$
In what follows, we will work in the space of convex chains and freely identify a polytope $P$ with the corresponding convex chain $\I_P$.
However, note that the embedding $\iota$ is not a homomorphism, that is, ${\I}_{P+Q}\ne{\I}_P+{\I}_Q$ (the sum of convex chains is defined as the usual sum of functions).

\begin{remark} The embedding $\iota:S\hookrightarrow\tilde V$ can be extended to the space $V$ of all virtual polytopes.
Namely, there exists a commutative operation $*$ on $\tilde V$ (called {\em product of convex chains}) such that
$${\I}_{P+Q}={\I}_P*{\I}_Q \eqno(M)$$
for any two convex polytopes $P$ and $Q$
(see \cite[Section 2, Proposition-Definition 3]{PKh}).
Virtual polytopes can be identified with the convex chains that are invertible with respect to $*$.
\end{remark}

Similarly to the space of convex chains, define the subspace $\tilde V_{\square}\subset \tilde V$ of {\em convex parachains} using only parapolytopes instead of all polytopes.
We will use repeatedly the following example of a parachain.

\begin{example} \label{e.paral_chain}
Consider the simplest case $d=1$.
Let $[\mu,\nu]\subset\R$ be a segment (i.e., $\mu<\nu$), and $[\nu,\mu]$ --- a virtual segment.
Using the existence of the operation $*$ satisfying $(M)$, it is easy to check that
$$\iota([\nu,\mu])=-\I_{[-\nu,-\mu]}+\I_{\{-\nu\}}+\I_{\{-\mu\}}$$
(note that the right hand side is the characteristic function of the open interval $(-\nu,-\mu)$).
Indeed,
$$\I_{[\mu,\nu]}*\left(-\I_{[-\nu,-\mu]}+\I_{\{-\nu\}}+\I_{\{-\mu\}}\right)=
-\I_{[\mu,\nu]}\ast\I_{[-\nu,-\mu]}+\I_{[\nu,\mu]}\ast\I_{\{-\nu\}}+
\I_{[\mu,\nu]}\ast\I_{\{-\mu\}}$$
$$=-\I_{[\mu-\nu,\nu-\mu]}+\I_{[\mu-\nu,0]}+\I_{[0,\nu-\mu]}=\I_{\{0\}}.$$
\end{example}
More generally, if $P\subset\R^d$ is a convex polytope then
$$(-1)^{\dim P}\I_P*\I_{{\rm{int}}(P^\vee)}=\I_{\{0\}},$$
where $P^\vee=\{-x\ |\ x\in P\}$, and ${\rm{int}}(P^\vee)$ denotes the interior of $P^\vee$ (see \cite[Section 2, Theorem 2]{PKh}).

\subsection{Divided difference operators on parachains}
For each $i=1$,\ldots, $n$, we now define a {\em divided difference (or Demazure) operator}
$D_i$ on the space of convex parachains $\tilde V_{\square}$.
It is enough to define $D_i$ on convex parapolytopes and then extend the definition by linearity to the other parachains.
Let $P$ be a parapolytope.
Choose the smallest $j=1$,\ldots, $d_i$ such that $P$ lies in the hyperplane $\{x^i_j=\mathrm{const}\}$.
If no such $j$ exists, then $D_i(\I_P)$ is not defined.
Otherwise, we expand $P$ in the direction of $x^i_j$ as follows.

First, suppose that a parapolytope $P$ lies in $(c+{\mathbb R}^{d_i})$ for some
$c\in{\mathbb R}^{d}$, i.e., $P=c+\Pi(\mu,\nu)$ is a coordinate parallelepiped.
We always fix the choice of $c$ by requiring that $c$ lies in the direct complement to $\R^{d_i}$ with respect to the decomposition
$\R^d=\R^{d_1}\oplus\ldots\oplus\R^{d_i}\oplus\ldots\oplus\R^{d_r}$.
Consider $\nu'=(\nu_1',\ldots,\nu_{d_i}')$, where $\nu'_k=\nu_k$ for all $k\ne j$, and $\nu'_j$ is defined by the equality
$$\sum_{k=1}^{d_i}(\mu_k+\nu'_k)=l_i(c).$$
If $\nu'_j\ge\nu_j$, then $D^+_i(P):=c+\Pi(\mu,\nu')$ is a true coordinate parallelepiped.
Note that $P$ is a facet of $D^+_i(P)$ unless $\nu'=\nu$

If $\nu'_j<\nu_j$, define $\mu'=(\mu_1',\ldots,\mu_{d_i}')$ by setting $\mu'_k=\mu_k$ for all $k\ne j$, and $\mu'_j=\nu_j'$.
Then $D^-_i(P):=c+\Pi(\mu',\nu)$ is a true coordinate parallelepiped, and $P$ is a facet of
$D^-_i(P)$.
Let $P'$ be the facet of $D^-_i(P)$ parallel to $P$.

We now define $D_i(\I_P)$ as follows:
$$D_i(\I_P)=\left\{
\begin{array}{ll}
\I_{D^+_i(P)} & \mbox{ if } \nu_j\le\nu'_j, \\
-\I_{D^-_i(P)}+\I_{P}+\I_{P'} & \mbox{ if } \nu_j>\nu'_j. \\
\end{array}
\right.$$

\begin{remark} \label{r.segment}
This definition is motivated by the following observation.
Let $\mu$ and $\nu$ be integers such that $\mu<\nu$.
Define the function $f(\mu,\nu,t)$ of a complex variable $t$ by the formula
$$f(\mu,\nu,t)=t^{\mu}+t^{\mu+1}+\ldots+t^{\nu},$$
that is, $f$ is the exponential sum over all integer points in the segment $[\mu,\nu]\subset\R$.
Computing the sum of the geometric progression, we get that
$$f(\mu,\nu,t)=\frac{t^\mu-t^{\nu+1}}{1-t}.$$
This formula gives a meromorphic continuation of $f(\mu,\nu,t)$ to all real $\mu$ and $\nu$.
In particular, for integer $\mu$ and $\nu$ such that $\mu>\nu$ we obtain
$$f(\mu,\nu,t)=\frac{t^\mu-t^{\nu+1}}{1-t}=-(t^{\nu+1}+\ldots+t^{\mu-1}),$$
that is, $f$ is minus the exponential sum over all integer points in the open interval $(\nu,\mu)\subset\R$ (cf. Example \ref{e.paral_chain}).
\end{remark}

\begin{defin}\label{d.main} For an arbitrary parapolytope $P\subset{\mathbb R}^d$
define $D_i(\I_P)$ by setting
$$D_i(\I_P)\left|_{c+\R^{d_i}}\right.=D_i(\I_{P\cap(c+{\mathbb R}^{d_i})})$$
for all $c$ in the complement to $\R^{d_i}$.
\end{defin}

It is not hard to check that this definition yields a convex chain.
In many cases (see examples below and in Section \ref{s.Demazure}), $D_i(\I_P)$ is the characteristic function of a polytope (and $P$ is a facet of this polytope unless $D_i(\I_P)=\I_P$).
This polytope will be denoted by $D_i(P)$.

The definition immediately implies that similarly to the classical Demazure operators the convex-geometric ones satisfy the identity $D_i^2=D_i$.
It would be interesting to find an analog of braid relations for these operators.

\subsection{Examples} \label{ss.examples}
\paragraph{\bf Dimension 2.} The simplest meaningful example is $\R^2=\R\oplus\R$.
Label coordinates in $\R^2$ by $x:=x_1^1$ and $y:=x_2^1$.
Assume that $l_1=y$ and $l_2=x$.
If $P=\{(\mu_1,\mu_2)\}$ is a point, and $\mu_2\ge 2\mu_1$, then $D_1(P)$ is a segment:
$$D_1(P)=[(\mu_1,\mu_2),(\mu_2-\mu_1,\mu_2)].$$
If $\mu_2<2\mu_1$, then $D_1(\I_P)$ is a virtual segment, that is,
$$D_1(\I_P)=-\I_{[(\mu_2-\mu_1,\mu_2),(\mu_1,\mu_2)]}+\I_P+\I_{(\mu_2-\mu_1,\mu_2)}.$$

If $P=AB$ is a horizontal segment, where $A=(\mu_1,\mu_2)$ and $B=(\nu_1,\mu_2)$, then $D_2(P)$ is the trapezoid $ABCD$ given by the inequalities
$$\mu_1\le x\le\nu_1,\quad \mu_2\le y\le x-\mu_2.$$
See Figure 1 for $D_2(P)$ in the case $\mu_1=-1$, $\nu_1=2$, $\mu_2=-1$ (left) and
$\mu_1=-1$, $\nu_1=2$, $\mu_2=0$ (right).
In the latter case, the convex chain $D_2(\I_P)$ is equal to
$$\I_{OBC}-\I_{ADO}+\I_{OA}+\I_{DO}-\I_{O}.$$
\begin{figure}
\includegraphics[width=7.25cm]{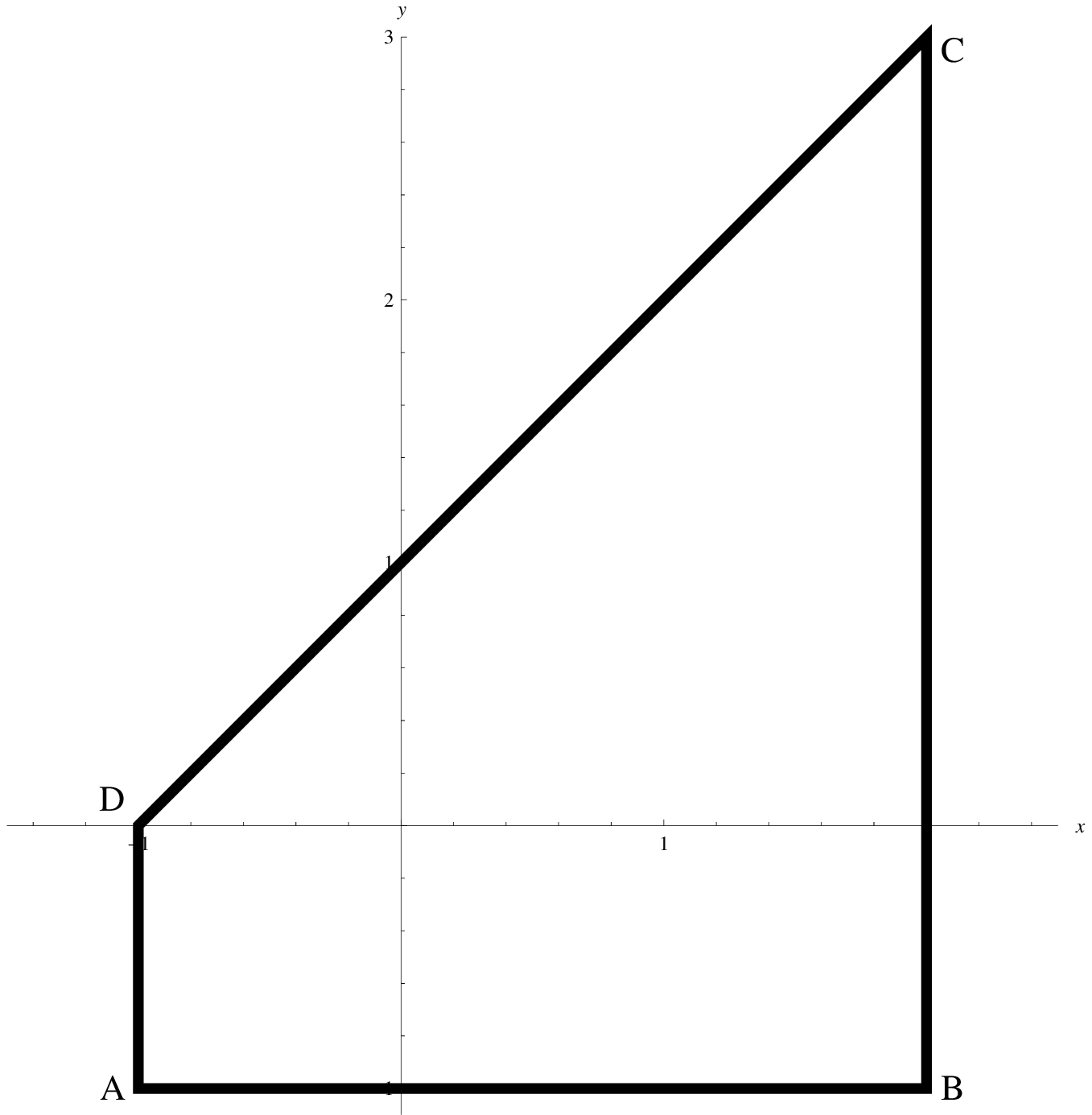} \includegraphics[width=7.25cm]{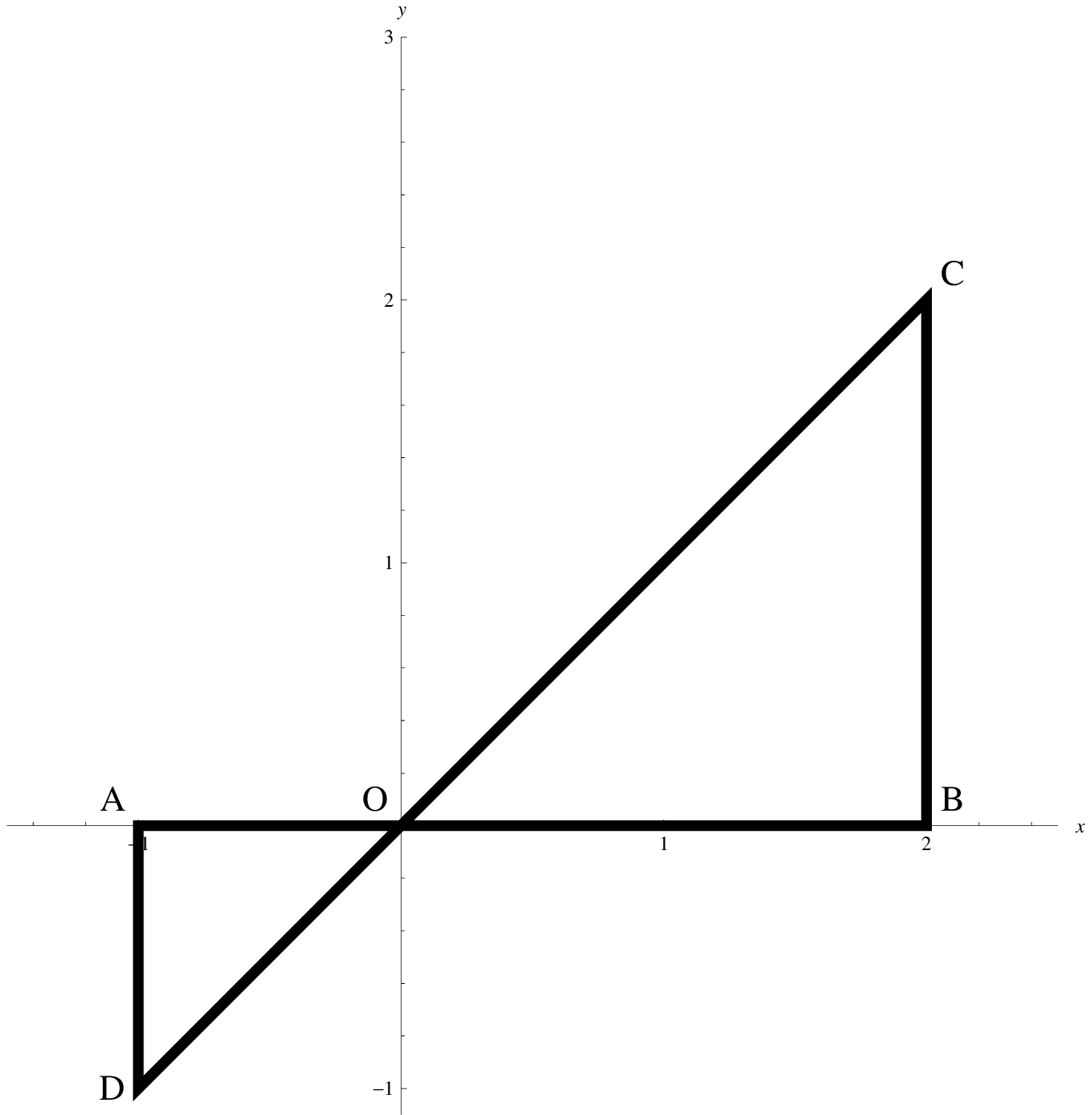}
\caption{Trapezoids $D_2(P)$ for different segments $P=AB$.}
\end{figure}

\paragraph{\bf Dimension 3.} A more interesting example is $\R^3=\R^2\oplus\R$.
Label coordinates in $\R^3$ by $x:=x_1^1$, $y:=x_2^1$ and $z:=x_1^2$.
Assume that $l_1=z$ and $l_2=x+y$.
If $P=(\mu_1,\mu_2,\mu_3)$ is a point, then $D_1(P)$ is a segment:
$$D_1(P)=[(\mu_1,\mu_2,\mu_3),(\mu_3-\mu_1-2\mu_2,\mu_2,\mu_3)].$$
Similarly, if $P=[(\mu_1,\mu_2,\mu_3),(\nu_1,\mu_2,\mu_3)]$ is a segment in $\R^2$, then $D_1(P)$ is the rectangle given by the equation $z=\mu_3$ and the inequalities
$$\mu_1\le x\le\nu_1,\quad \mu_2\le y\le \mu_3-\mu_1-\nu_1-\mu_2.$$

Using the previous calculations, it is easy to show
that if $P=(\l_2,\l_3,\l_3)$ is a point and $\l_3<\l_2<-\l_2-\l_3$, then $D_1D_2D_1(P)$ is the 3-dimensional Gelfand--Zetlin polytope $Q_\l$ (as defined in Example \ref{e.GZ}) for $\l=(\l_1,\l_2,\l_3)$, where $\l_1=-\l_2-\l_3$.
Indeed, $D_2D_1(P)$ is the trapezoid (see Figure 2) given by the equation $y=\l_3$ and the inequalities
$$\l_2\le x\le\l_1,\quad \l_3\le z\le x.$$
Then $D_1D_2D_1(P)$ is the union of all rectangles $D_2(I_a)$ for $a\in[\l_3,\l_1]$, where $I_a$ is the segment $D_2D_1(P)\cap\{z=a$\}, that is, $I_a=[(\max\{z,\l_2\},\l_3,a),(\l_1,\l_2,a)]$. Hence,
$$\l_3\le y\le \min\{\l_2,z\}.$$

\begin{figure}
\includegraphics[width=7cm]{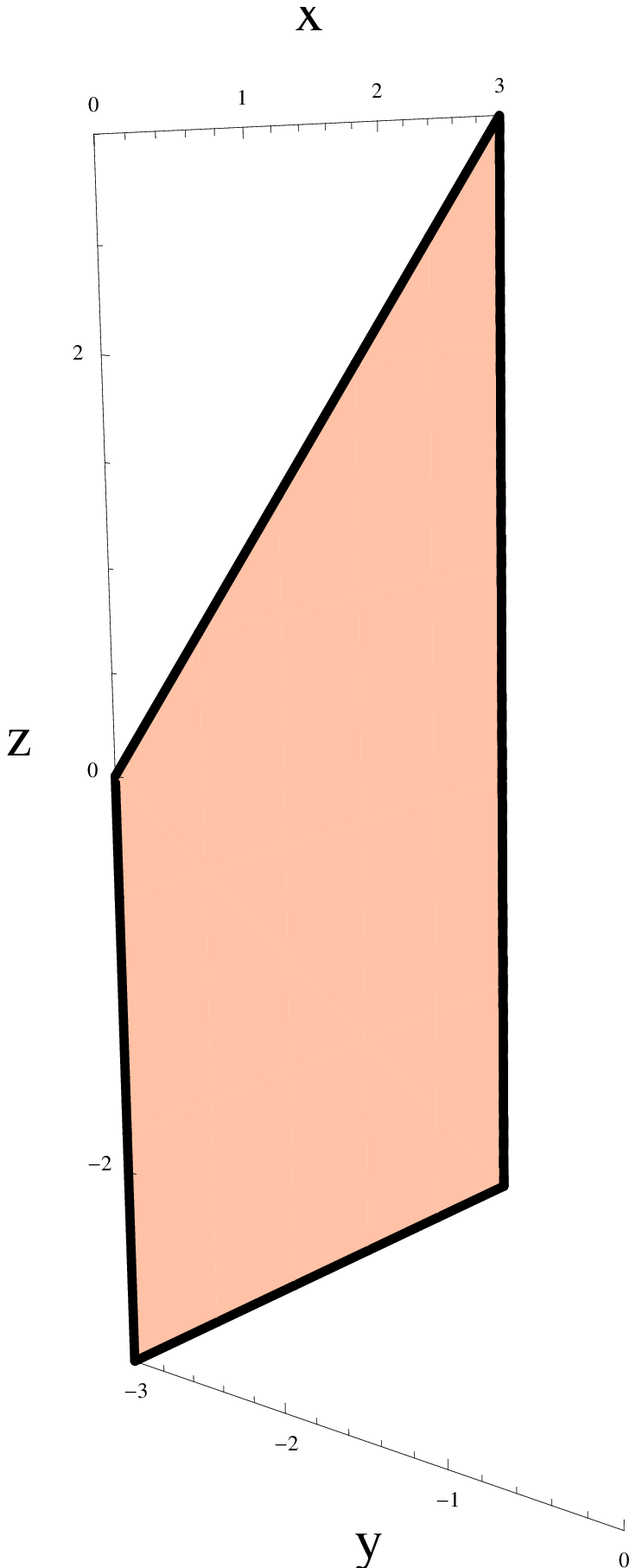} \includegraphics[width=7cm]{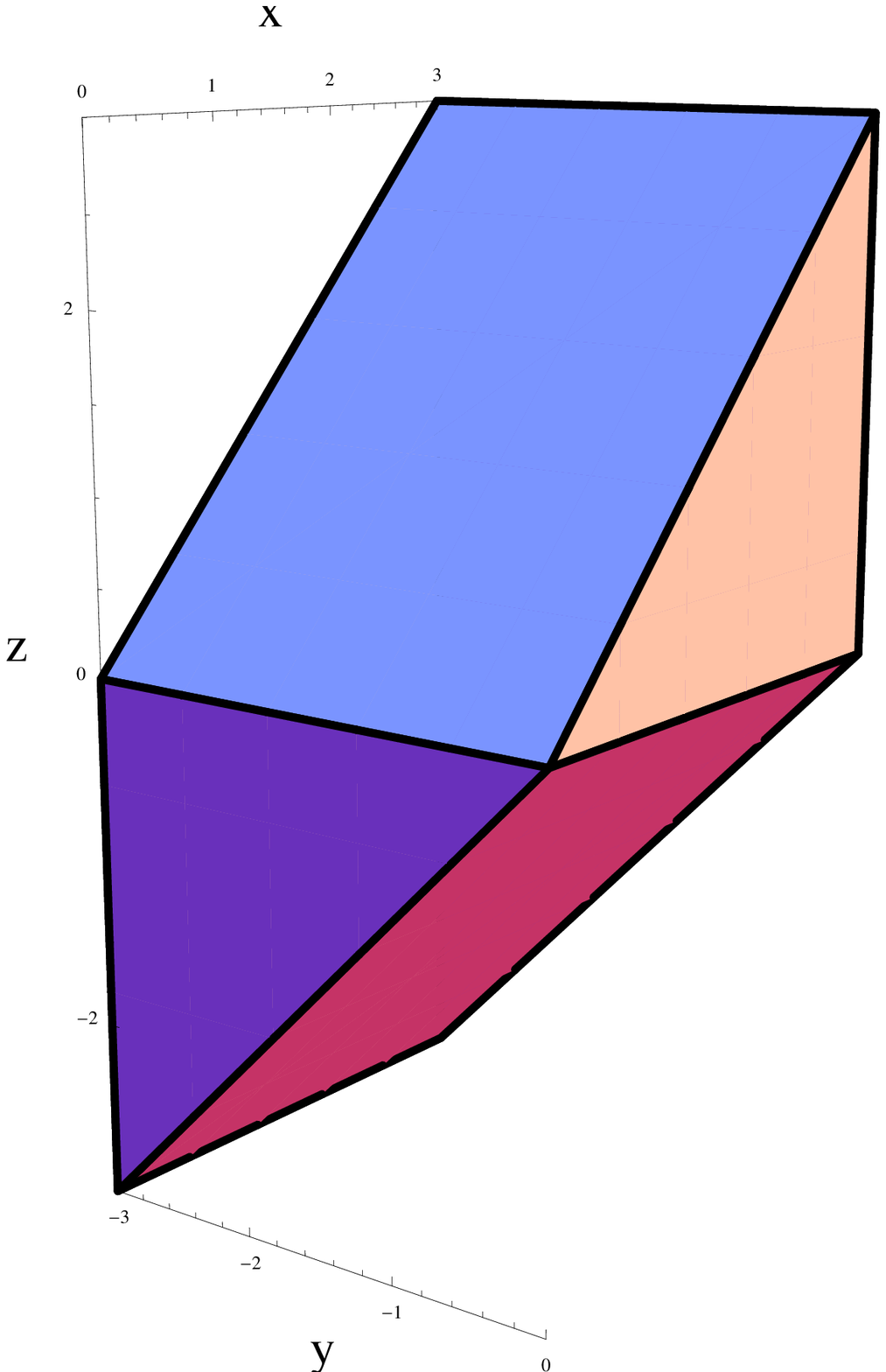}
\caption{Trapezoid $D_2D_1(P)$ and polytope $D_1D_2D_1(P)$ for a point $P=(0,-3,-3)$}
\end{figure}
Similarly to the last example, we construct Gelfand--Zetlin polytopes for
arbitrary $n$ using the string space from Example \ref{e.GZ} (see Theorem \ref{t.GZ}).

\section{Polytopes and Demazure characters}\label{s.Demazure}

\subsection{Characters of polytopes}
For a string space $\R^d=\R^{d_1}\oplus\ldots\oplus\R^{d_r}$,
denote by $\sigma_i(x)$ the sum of the coordinates of $x\in\R^d$ that correspond
to the subspace $\R^{d_i}$, i.e., $\sigma_i(x)=\sum_{k=1}^{d_i} x_k^i$.
With each integer point $x\in\R^d$ in the string space, we associate the
{\em weight} $p(x)\in\R^r$ defined as $(\sigma_1(x),\ldots,\sigma_r(x))$.
For the rest of the paper, we will always assume that $l_i(x)$ depends only on $p(x)$, that is, $l_i$ comes from a linear function on $\R^r$ (the latter will also be denoted by $l_i$).
In addition, we assume that $l_i$ is integral, i.e., $l_i(x)\in\Z$ for all $x\in\Z^n$.

Denote the basis vectors in $\R^r$ by $\a_1$, \ldots, $\a_r$, and denote the coordinates with respect to this basis by $(y_1,\ldots,y_r)$.
For each $i=1$, \ldots $r$, define the affine reflection
$s_i:\R^r\to\R^r$ by the formula
$$s_i(y_1,\ldots,y_i,\ldots,y_r)=(y_1,\ldots,l_i(y)-y_i,
\ldots,y_r).$$

\begin{example} \label{e.GL}
For the string space $\mathbb R^d=\mathbb R^{n-1}\oplus\mathbb R^{n-2}\oplus\ldots\oplus\mathbb R^1$ from Example \ref{e.GZ}, define the functions $l_i$ by the formula
$$l_i(x)=\sigma_{i-1}(x)+\sigma_{i+1}(x),$$
where we put $\sigma_0=\sigma_n=0$.
Identify $\R^{n-1}$ with the weight lattice of $SL_n$ so that
$\a_i$ is identified with the $i$-th simple root.
In this case, the reflection $s_i$ coincides with the simple reflection in the hyperplane perpendicular to the root $\a_i$.
\end{example}

We now consider the ring $R$ of Laurent polynomials in the formal exponentials
$t_1:=e^{\a_1}$, \ldots, $t_n:=e^{\a_n}$ (that is, $R$ is the group algebra of the lattice $\Z^n\subset\R^n$).
Let $P\subset\R^d$ be a {\em lattice} polytope in the string space, i.e., the vertices of $P$ belong to $\Z^n$.
Define the {\em character of $P$} as the  sum of formal exponentials $e^{p(x)}$ over all
integer points $x$ inside and at the boundary of $P$:
$$\chi(P):=\sum_{x\in P\cap\Z^d}e^{p(x)}.$$
In particular, if $d=r$, then $\chi(P)$ is exactly the integer point transform of $P$.
The $R$-valued function $\chi$ can be extended by linearity to all {\em lattice} convex chains, that is, to the chains $\sum_P c_P {\I}_P$ such that $P$ is a lattice polytope and $c_P\in\Z$.

Define {\em Demazure operator} $T_i$ on $R$ as follows:
$$[T_if](y)=\frac{f(y)-t_is_if(y)}{1-t_i},$$
where $s_if(y):=f(s_iy)$.
For the string space of Example \ref{e.GL}, these operators reduce to the
classical Demazure operators on the group algebra of the weight lattice of $SL_n$.

The following result motivates Definition \ref{d.main}.
\begin{thm} \label{t.Demazure} Let $P\subset \R^d$ be a lattice parapolytope.
Then
$$\chi(D_i(\I_P))=T_i\chi(P).$$
\end{thm}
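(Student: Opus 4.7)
The plan is to reduce the identity to a one-slice computation and then carry out an explicit sum of geometric progressions. Both sides respect the decomposition of $P$ into slices $P\cap(c+\R^{d_i})$ parallel to $\R^{d_i}$: on the left this is built into Definition~\ref{d.main}, and on the right it follows from the fact that $l_i$ vanishes on $\R^{d_i}$, so $l_i(p(x))=l_i(c)$ stays constant along the slice through any $c$ in the complement of $\R^{d_i}$. I would therefore reduce to the case where $P=c+\Pi(\mu,\nu)$ is a single coordinate parallelepiped with $\mu_j=\nu_j$ for the smallest admissible index $j$.

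For such a slice the character takes the product form
\[
\chi(P)=e^{p(c)}\prod_{k=1}^{d_i}f(\mu_k,\nu_k,t_i),
\]
where $f(\mu,\nu,t):=t^\mu+t^{\mu+1}+\cdots+t^\nu$ as in Remark~\ref{r.segment}, and each lattice point $x\in P$ satisfies $e^{p(x)}=e^{p(c)}t_i^{\sigma_i(x)}$. From the formula $T_i(e^{p(c)}t_i^m)=e^{p(c)}(t_i^m-t_i^{l_i(c)-m+1})/(1-t_i)$, summing over $x\in P\cap\Z^d$ and using the elementary identity $f(-\nu_k,-\mu_k,t)=t^{-\mu_k-\nu_k}f(\mu_k,\nu_k,t)$, I would obtain
\[
T_i\chi(P)=\frac{e^{p(c)}}{1-t_i}\prod_{k=1}^{d_i}f(\mu_k,\nu_k,t_i)\cdot\bigl(1-t_i^{l_i(c)+1-\sum_k(\mu_k+\nu_k)}\bigr).
\]
Using $\mu_j=\nu_j$ together with the defining relation $\sum_k(\mu_k+\nu'_k)=l_i(c)$, the exponent in the second factor simplifies to $\nu'_j-\nu_j+1$, and combining it with $f(\mu_j,\nu_j,t_i)=t_i^{\mu_j}$ reshapes the whole expression into
\[
T_i\chi(P)=e^{p(c)}\,f(\mu_j,\nu'_j,t_i)\prod_{k\neq j}f(\mu_k,\nu_k,t_i).
\]

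Finally I would match this with $\chi(D_i(\I_P))$. When $\nu'_j\geq\nu_j$ this is immediate: $D_i(\I_P)=\I_{D^+_i(P)}$ with $D^+_i(P)=c+\Pi(\mu,\nu')$, and the product formula applies. The main obstacle is the case $\nu'_j<\nu_j$, where $D_i(\I_P)$ is the virtual chain $-\I_{D^-_i(P)}+\I_P+\I_{P'}$ and the symbol $f(\mu_j,\nu'_j,t_i)$ must be interpreted via the meromorphic continuation $(t_i^{\mu_j}-t_i^{\nu'_j+1})/(1-t_i)$ from Remark~\ref{r.segment}. Here the key check is the one-dimensional identity
\[
-f(\nu'_j,\mu_j,t_i)+t_i^{\mu_j}+t_i^{\nu'_j}=\frac{t_i^{\mu_j}-t_i^{\nu'_j+1}}{1-t_i},
\]
which is a direct manipulation of geometric sums in the spirit of Example~\ref{e.paral_chain}; multiplying by $e^{p(c)}\prod_{k\neq j}f(\mu_k,\nu_k,t_i)$ then completes the identification.
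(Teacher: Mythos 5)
Your proof is correct, and its skeleton is the same as the paper's: both reduce, via Definition \ref{d.main} and the linearity of $T_i$ together with the fact that $l_i\circ p$ is constant on each slice $c+\R^{d_i}$, to the case of a single coordinate parallelepiped $P=c+\Pi(\mu,\nu)$ with $\mu_j=\nu_j$. The difference is in how the one-slice identity is verified. The paper writes $\chi(P)=e^{p(c)}\sum_{z\in\Gamma\cap\Z^{d_i}}t_i^{\sigma(z)}$ and, in the case $\nu_j'\ge\nu_j$, invokes \cite[Proposition 6.3]{KST} as a black box, dismissing the virtual case $\nu_j'<\nu_j$ as ``completely analogous.'' You instead factor $\chi(P)=e^{p(c)}\prod_k f(\mu_k,\nu_k,t_i)$ and compute $T_i$ directly by summing geometric progressions, arriving at $e^{p(c)}f(\mu_j,\nu_j',t_i)\prod_{k\ne j}f(\mu_k,\nu_k,t_i)$ with $f$ understood via the meromorphic continuation of Remark \ref{r.segment}. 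This buys two things: the argument is self-contained (no appeal to \cite{KST}), and the two cases are handled uniformly, with the virtual case reduced to the transparent one-dimensional identity $-f(\nu_j',\mu_j,t)+t^{\mu_j}+t^{\nu_j'}=(t^{\mu_j}-t^{\nu_j'+1})/(1-t)$, which is exactly the content of Example \ref{e.paral_chain}. All the intermediate formulas you state (the exponent $l_i(c)+1-\sum_k(\mu_k+\nu_k)=\nu_j'-\nu_j+1$, the reflection identity $f(-\nu,-\mu,t)=t^{-\mu-\nu}f(\mu,\nu,t)$, and the final matching in both cases) check out.
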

\begin{proof}
By definition of $D_i(\I_P)$, it suffices to prove this identity when $P=c+\Gamma$, where $c$ lies in the complement to $\R^{d_i}$ and  $\Gamma:=\Pi(\mu,\nu)\subset\R^{d_i}$ is a coordinate parallelepiped.
Then
$$\chi(P)=e^{p(c)}\sum_{z\in \Gamma\cap\Z^{d_i}}t_i^{\sigma(z)}.$$
Hence,
$$T_i(\chi(P))=e^{p(c)}T_i\left(\sum_{z\in \Gamma\cap\Z^{d_i}}t_i^{\sigma(z)}\right).$$
Recall that by definition of $\nu'$ we have
$$\sum_{k=1}^{d_i}(\mu_k+\nu'_k)=l_i(c).$$
Assume that $\nu_j'\ge\nu_j$.
Let $\Pi$ denote $\Pi(\mu,\nu ')$.
Then $\Gamma$, $\Pi$ and $T_i$ satisfy the hypothesis of \cite[Proposition 6.3]{KST}.
Applying this proposition we get that
$$T_i\left(\sum_{z\in \Gamma\cap\Z^{d_i}}t_i^{\sigma(z)}\right)=\sum_{z\in\Pi\cap\Z^{d_i}}t_i^{\sigma(z)}.$$
Hence, $T_i(\chi(P))=\chi(D_i(P))$.

The case $\nu'_j<\nu_j$ is completely analogous.
\end{proof}
Note that Theorem \ref{t.Demazure} for $d_i=1$ follows directly from the definitions of $T_i$ and $D_i$ (see Remark \ref{r.segment}).

\begin{example} \label{e.trap_square} Figure 3 illustrates Theorem \ref{t.Demazure} when $d_i=2$ and $P=c+\Gamma$ where $\Gamma\subset \R^{d_i}$ is the segment $[(-1,-1),(2,-1)]$.
Namely, $T_i(t_i^{x_1^i+x_2^i})$ is equal to the character of the segment
$[(x_1^i,x_2^i),(x_1^i,l_i(c)-2x_1^i-x_2^i)]$ for every $(x_1^i,x_2^i)\in \Gamma\cap\Z^2$
by definition of $T_i$.
Hence, $$\sum_{(x_1^i,x_2^i)\in \Gamma\cap\Z^2}T_i(t_i^{x_1^i+x_2^i})$$ for $l_i(c)=3$ coincides with the character of the trapezoid shown on Figure 3 (left).
It is easy to construct a bijective correspondence between the integer points in the trapezoid and those in the rectangle $D_i(P)$ in such a way that the sum of coordinates is preserved.
The former are marked by black dots, and the latter by empty circles.
\begin{figure}\label{f.trap_square}
\includegraphics[width=5cm]{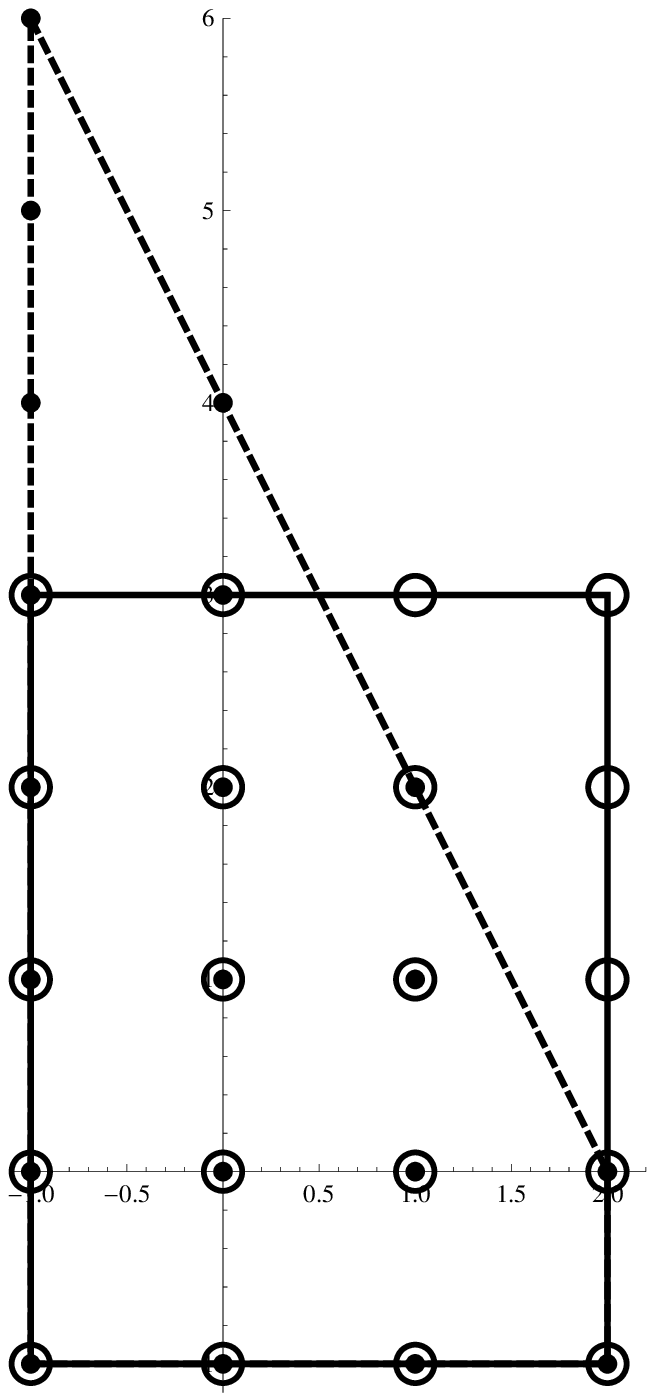} \quad
\includegraphics[width=5cm]{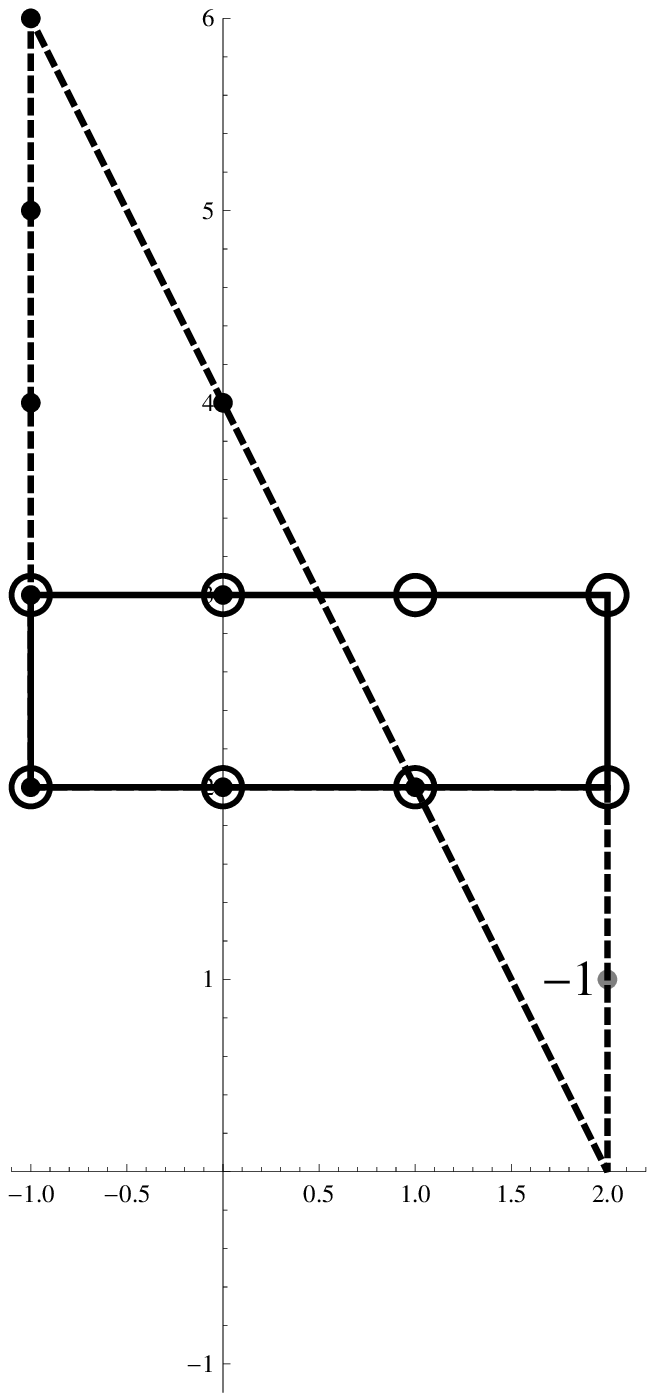}
\caption{Rectangle and trapezoid yield the same character}
\end{figure}
\end{example}

Theorem \ref{t.Demazure} allows one to construct various polytopes (possibly virtual) and convex chains whose characters yield the Demazure characters (in particular, the Weyl character) of irreducible representations of reductive groups (see Section \ref{ss.reductive}).
The same character can be captured using string spaces for different partitions $d=d_1+d_2+\ldots+d_n$ (see Section \ref{ss.degen}).
The case $d_1=\ldots=d_n=1$ produces polytopes with very simple combinatorics, namely, multidimensional versions of trapezoids that are combinatorially equivalent to cubes (they are called {\em twisted cubes} in \cite{GK}).
However, twisted cubes that represent the Weyl characters are virtual.
Considering string spaces with $d_i>1$ allows one to represent the Weyl character by a true though more intricate polytope (see Example \ref{e.GK} for $SL_3$ and Section \ref{ss.degen}).
The reason is illustrated by Figure 3 (right) that depicts a virtual trapezoid and a (true) rectangle with the same character.
Note that the point $(2,1)$ (marked by $-1$) contributes a negative summand to the character of the trapezoid.

\subsection{Gelfand--Zetlin polytopes for $SL_n$}\label{ss.GL}
Let $\mathbb R^d=\mathbb R^{n-1}\oplus\mathbb R^{n-2}\oplus\ldots\oplus\mathbb R^1$ be the string space of rank $(n-1)$ from Example \ref{e.GL}.
The theorem below shows how to construct the classical Gelfand--Zetlin polytopes (see Example \ref{e.GZ}) via the convex-geometric Demazure operators $D_1$,\ldots, $D_{n-1}$.
\begin{thm}\label{t.GZ} For every strictly dominant weight $\l=(\l_1,\ldots,\l_n)$ (that is, $\l_1>\ldots>\l_n$) of $GL_n$ such that $\l_1+\ldots+\l_n=0$, the Gelfand--Zetlin polytope $Q_\l$ coincides with the polytope
$$\left[(D_1)(D_2D_1)(D_3D_2D_1)\ldots(D_{n-1} \ldots D_1)\right](a_\l),$$
where $a_\l\in\R^d$ is the point $(\l_2,\ldots,\l_n;\l_3,\ldots,\l_n;
\ldots;\l_n)$.
\begin{proof}
Let us define the polytope
$$P_\l(i,j):=\left[(\hat D_{n-j}\ldots \hat D_i D_{i-1}\ldots D_1)\ldots(D_{n-1} \ldots D_1)\right](a_\l)$$
for every pair $(i,j)$ such that $1\le i\le (n-j)\le (n-1)$.
Put $x^0_l=\l_i$ for $l=1$,\ldots, $n$.
We will show by induction on dimension that $P_\l(i,j)$ is the face of the Gelfand--Zetlin polytope $Q_\l$ given by the equations $x^k_l=x^{k-1}_{l+1}$ for all pairs $(k,l)$ such that either $l>j$, or $l=j$ and $k\ge i$.
The induction base is $P_\l(1,1)=a_\l$, which is clearly a vertex of $Q_\l$ by our assumption.
The induction step follows from Lemma \ref{l.GZ} below.
Hence, $P_\l(1,n-1)$ is the facet of $Q_\l$ given by the equation $x^1_{n-1}=\l_{n}$.
Applying Lemma \ref{l.GZ} again, we get that $D_1(P_\l(1,n-1))=Q_\l$.
\end{proof}
\end{thm}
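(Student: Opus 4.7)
The plan is to argue by induction on the number of Demazure operators applied, with the inductive hypothesis being the identification of each $P_\l(i,j)$ with the face of $Q_\l$ defined by the specified equalities $x^k_l = x^{k-1}_{l+1}$. The base case $P_\l(1,1) = a_\l$ is clear: at this point $x^k_l = \l_{k+l}$, every inequality $x^k_l \le x^{k-1}_{l+1}$ of $Q_\l$ is tight, and strict dominance $\l_1 > \cdots > \l_n$ makes $a_\l$ a genuine vertex. The inductive step I would encapsulate in Lemma \ref{l.GZ}, which should read: $D_i(P_\l(i,j)) = P_\l(i+1,j)$ when $i < n-j$, and $D_{n-j}(P_\l(n-j,j)) = P_\l(1,j+1)$ otherwise. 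Geometrically, each application of $D_i$ should drop the single equality $x^i_j = x^{i-1}_{j+1}$ from the list defining the current face, raising the dimension by one.

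To prove the lemma I would work slice by slice. For $c$ in the complement of $\R^{n-i}$, the equations of $P_\l(i,j)$ at $(i,k)$ and $(i+1,k)$ with $k \ge j$ freeze $x^i_k = c^{i-1}_{k+1}$ for every $k \ge j$ (and simultaneously impose the collapse $c^{i-1}_l = c^{i+1}_{l-2}$ for $l \ge j+2$ on the projected $c$), while the remaining coordinates $x^i_1, \ldots, x^i_{j-1}$ vary freely inside $[\mu_k, \nu_k]$ with $\mu_k = \max(c^{i-1}_{k+1}, c^{i+1}_k)$ and $\nu_k = \min(c^{i-1}_k, c^{i+1}_{k-1})$ (with $c^{i+1}_0 = +\infty$). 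Thus the slice is a coordinate parallelepiped whose smallest frozen coordinate index is $j$, so $D_i$ will expand in the $x^i_j$ direction. The main computation is then to verify that the value $\nu'_j$ given by the Demazure relation $\sum_{k=1}^{n-i}(\mu_k + \nu'_k) = l_i(c) = \sigma_{i-1}(c) + \sigma_{i+1}(c)$ equals the GZ upper bound $\min(c^{i-1}_j, c^{i+1}_{j-1})$ on $x^i_j$.

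This identity, which I expect to be the main obstacle, should follow by rewriting each $\max(c^{i-1}_{k+1}, c^{i+1}_k) + \min(c^{i-1}_k, c^{i+1}_{k-1})$ via $\max(a,b) = a + b - \min(a,b)$: the difference of $\min$ terms telescopes over $k = 1, \ldots, j-1$ to $c^{i-1}_1 - \min(c^{i-1}_j, c^{i+1}_{j-1})$ and leaves a linear remainder $\sum_{k=1}^{j-1}(c^{i-1}_{k+1} + c^{i+1}_k)$; combining this with the frozen-part sum $2\sum_{k \ge j} c^{i-1}_{k+1}$ and cancelling it against the row-$(i-1)$ and row-$(i+1)$ terms of $l_i(c)$ by means of the face relations $c^{i-1}_l = c^{i+1}_{l-2}$, the Demazure equation collapses to $\nu'_j = \min(c^{i-1}_j, c^{i+1}_{j-1})$. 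The inequality $\nu'_j \ge \nu_j = c^{i-1}_{j+1}$ needed to place us in the expansion branch of $D_i$ then follows from row monotonicity $c^{i-1}_j \ge c^{i-1}_{j+1}$ and from the GZ relation $c^{i+1}_{j-1} \ge x^i_j = c^{i-1}_{j+1}$ on the slice. Assembling the expanded slices via Definition \ref{d.main} produces the face of $Q_\l$ with the single equation $x^i_j = x^{i-1}_{j+1}$ removed, i.e., the next face $P_\l(i+1,j)$ or $P_\l(1,j+1)$; a final application of $D_1$ to $P_\l(1,n-1)$ then removes $x^1_{n-1} = \l_n$ and yields all of $Q_\l$.
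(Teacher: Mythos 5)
Your proof is correct and follows essentially the same route as the paper: induction over the intermediate faces $P_\l(i,j)$ of $Q_\l$, with the key lemma that each $D_i$ deletes the single equation $x^i_j=x^{i-1}_{j+1}$, verified slice by slice using $\max(a,b)+\min(a,b)=a+b$ and the telescoping of the resulting $\min$ terms (this is exactly the content of the paper's Lemma \ref{l.GZ}, which the paper leaves to the reader and you carry out explicitly). Your formulas $\mu_k=\max(c^{i-1}_{k+1},c^{i+1}_k)$, $\nu_k=\min(c^{i-1}_k,c^{i+1}_{k-1})$ are the correct reading of the interlacing conditions, consistent with the paper's Example \ref{e.GZ} up to an index transposition there.
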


Note that any Gelfand--Zetlin polytope $Q_\l$ can be obtained by a parallel translation from one with $\l_1+\ldots+\l_n=0$.

The lemma below can be easily deduced directly from the definition of $D_i$ using Example \ref{e.GZ} together with an evident observation that $a+b=\min\{a,b\}+\max\{a,b\}$ for any $a,b\in\R$.
\begin{lemma} \label{l.GZ} Let $\Gamma$ be a face of the Gelfand--Zetlin polytope $Q_\l$ given by the following equations
$$\begin{array}{lllllllll}
x^{i-1}_1&x^{i-1}_2 &x^{i-1}_3 &\ldots&x^{i-1}_{j}&x^{i-1}_{j+1}& x^{i-1}_{j+2}&\ldots&   x^{i-1}_{n-i+1}\\
         &          &          &      && \|      &\|            &      &   \|             \\
         &x^i_1     &x^i_2     &\ldots&x^{i}_{j-1}&x^i_{j}& x^i_{j+1}      &\ldots&   x^i_{n-i}      \\
         &          &          &      &&         &\|            &      &   \|              \\
         &          &x^{i+1}_1 &\ldots&x^{i+1}_{j-2}&x^{i+1}_{j-1}& x^{i+1}_{j}&\ldots&x^{i+1}_{n-i-1}\\
\end{array}.
$$
as well as by (possibly) other equations that do not involve variables $x_1^i$,\ldots $x_{n-i}^i$.
Then the defining equations of $D_i(\Gamma)$ are obtained from those of $\Gamma$ by removing the equation $x^i_j=x^{i-1}_{j+1}$.
\end{lemma}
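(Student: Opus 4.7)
The plan is to apply Definition \ref{d.main} slice by slice along $\R^{n-i}$. For any $c$ in the complement to $\R^{n-i}$, the intersection $\Gamma\cap(c+\R^{n-i})$ can be read off from the inequalities in Example \ref{e.GZ} together with the equations cutting out $\Gamma$: the equalities $x^i_k=x^{i-1}_{k+1}$ for $k\ge j$ and $x^i_k=x^{i+1}_{k-1}$ for $k>j$ force every $x^i_k$ with $k\ge j$ to be pinned to a value determined by $c$. Hence the slice is a coordinate parallelepiped $c+\Pi(\mu,\nu)$ in which $\mu_k=\nu_k$ for $k\ge j$, while for $k<j$ the coordinate $x^i_k$ still ranges over the full interval of Example \ref{e.GZ}. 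In particular $j$ is the smallest index at which the slice is degenerate, and $D_i$ is defined with this index.

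The heart of the argument is to compute $\nu'_j$ from the defining relation $\sum_{k=1}^{n-i}(\mu_k+\nu'_k)=l_i(c)=\sigma_{i-1}(c)+\sigma_{i+1}(c)$ and to identify it with the desired upper bound for $x^i_j$. I would split the sum into $k<j$, $k=j$, and $k>j$ contributions. The terms with $k>j$ simplify to $\mu_k+\nu_k=c^{i-1}_{k+1}+c^{i+1}_{k-1}$ by using the two pinning equations simultaneously; the $k=j$ term contributes $\mu_j=c^{i-1}_{j+1}$; and each $k<j$ term is a sum of a max and a min of neighbouring $c$-entries over pairs shifted by one in row-index. To invoke the identity $a+b=\min(a,b)+\max(a,b)$ suggested in the statement, I would first reindex the min-summand so that max and min are over the same pair, then telescope. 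After cancellation against $l_i(c)$, almost every $c^{i-1}$- and $c^{i+1}$-entry drops out, and the residual simplifies via $a+b-\max(a,b)=\min(a,b)$ to
$$\nu'_j=\min(c^{i-1}_j,c^{i+1}_{j-1}),$$
which is precisely the upper bound on $x^i_j$ inherited from $Q_\l$ once the equation $x^i_j=x^{i-1}_{j+1}$ is omitted.

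The inequality $\nu'_j\ge\nu_j=c^{i-1}_{j+1}$ now follows directly from the constraints holding on $\Gamma$, since $x^i_j=c^{i-1}_{j+1}$ must respect the usual GZ bounds on $x^i_j$. Thus the $D^+_i$ case of the construction applies, and $D_i$ of the slice is the parallelepiped $c+\Pi(\mu,\nu')$, which coincides with the slice at $c$ of the face of $Q_\l$ obtained from $\Gamma$ by dropping the single equation $x^i_j=x^{i-1}_{j+1}$. Reassembling over $c$ according to Definition \ref{d.main} proves the lemma. The main technical obstacle is the bookkeeping in the telescoping step of the second paragraph: the max and min appearing in the same $\mu_k+\nu_k$ are taken over different, shifted pairs of $c$-entries, so the identity $a+b=\min+\max$ cannot be applied term by term and only becomes useful after a careful reindexing makes the cancellations visible.
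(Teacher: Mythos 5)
Your argument is correct and is exactly the route the paper intends: the paper offers no written proof beyond the remark that the lemma ``can be easily deduced directly from the definition of $D_i$ using Example \ref{e.GZ} together with \dots\ $a+b=\min\{a,b\}+\max\{a,b\}$'', and your slice-by-slice computation --- pinning $x^i_k$ for $k\ge j$, reindexing and telescoping the $k<j$ terms, and cancelling against $l_i(c)$ to get $\nu'_j=\min(c^{i-1}_j,c^{i+1}_{j-1})$ with $\nu'_j\ge\nu_j$ so that the $D^+_i$ case applies --- is precisely the fleshed-out version of that hint. I note only that your value of $\nu'_j$ (correctly, the upper GZ bound on $x^i_j$) follows the convention $\l_1>\dots>\l_n$ used everywhere else in the paper rather than the literally printed formulas for $\mu_j,\nu_j$ in Example \ref{e.GZ}, whose roles appear to be inadvertently swapped there.
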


Recall that integer points inside and at the boundary of the Gelfand--Zetlin polytope $Q_\l$ by definition of this polytope parameterize a natural basis ({\em Gelfand--Zetlin basis}) in the irreducible representation of $GL_n$ with the highest weight $\l$.
Under this correspondence, the map $p:\R^{d}\to\R^{n-1}$ assigns to every integer point the weight of the corresponding basis vector.
Combining Theorem \ref{t.GZ} with Theorem \ref{t.Demazure} one gets a combinatorial proof of the Demazure character formula for the decomposition $w_0=(s_1)(s_2s_1)(s_3s_2s_1)\ldots (s_{n-1}s_{n-2}\ldots s_1)$ of the longest word in $S_n$ (in this case, the Demazure character coincides with the Weyl character of $V_\l$).
Here $s_i$ denotes the elementary transposition $(i,i+1)\in S_n$.

\subsection{Applications to arbitrary reductive groups}\label{ss.reductive} We now generalize Gelfand--Zetlin polytopes to other reductive groups using Theorem \ref{t.Demazure}.
Let $G$ be a connected reductive group of semisimple rank $r$.
Let $\alpha_1$,\ldots, $\alpha_r$ denote simple roots of $G$, and
$s_1$,\ldots, $s_r$ the corresponding simple reflections.
Fix a reduced decomposition $w_0=s_{i_1}s_{i_2}\cdots s_{i_d}$ where $w_0$ is the longest element of the Weyl group of $G$.
Let $d_i$ be the number of $s_{i_j}$ in this decomposition such that $i_j=i$.
Consider the string space
$$\mathbb R^d=\mathbb R^{d_1}\oplus\ldots\oplus\mathbb R^{d_r},$$
where the functions $l_i$ are given by the formula:
$$l_i(x)=\sum_{k\ne i} (\alpha_k,\alpha_i)\sigma_k(x).$$
Here $(\alpha_k,\alpha_i)$ is determined by the simple reflection $s_i$ as follows:
$$s_i(\alpha_k)=\alpha_k+(\alpha_k,\alpha_i)\alpha_i,$$
(that is, the function $(\cdot,\a_i)$ is minus the coroot corresponding to $\a_i$).
In particular, if $G=SL_n$ and $w_0=(s_1)(s_2s_1)(s_3s_2s_1)\ldots(s_{n-1}\ldots s_1)$, then we get the string space from Example \ref{e.GL}.

Define the projection $p$ of the string space to the real span $\R^r$ of the weight lattice of $G$ by the formula $p(x)=\sigma_1(x)\alpha_1+\ldots+\sigma_r(x)\alpha_r$.

\begin{thm} \label{t.Weyl} For every dominant weight $\lambda$ in the root lattice of $G$, and every point $a_\lambda\in\mathbb \Z^d$ such that $p(a_\l)=w_0\l$ the convex chain
$$P_\l:=D_{i_1}D_{i_2}\ldots D_{i_d}(a_\lambda)$$
yields the Weyl character $\chi(V_\lambda)$ of the irreducible $G$-module $V_\lambda$, that is,
$$\chi(V_\lambda)=\chi(P_\l).$$
\end{thm}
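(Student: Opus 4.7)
The plan is to combine Theorem \ref{t.Demazure} with the classical Demazure character formula in the convention of this paper. Since $a_\lambda$ is a single lattice point, $\chi(\I_{a_\lambda}) = e^{p(a_\lambda)} = e^{w_0\lambda}$; once $D_{i_1}\cdots D_{i_d}(\I_{a_\lambda})$ is known to be a well-defined lattice parachain, iterating Theorem \ref{t.Demazure} (and extending $T_i$ and $D_i$ by linearity from parapolytopes to parachains) immediately yields
$$\chi(P_\lambda) = T_{i_1}T_{i_2}\cdots T_{i_d}\, e^{w_0\lambda}.$$

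The first step is to establish well-definedness by reverse induction on $k$. I would show that $Q_k := D_{i_k}\cdots D_{i_d}(\I_{a_\lambda})$ is a finite integer combination of characteristic functions of lattice parapolytopes, each of which lies in some hyperplane $\{x^{i_{k-1}}_j = \mathrm{const}\}$, so that $D_{i_{k-1}}$ may be applied next. The key combinatorial input is that in any reduced decomposition of $w_0$ the simple reflection $s_i$ appears exactly $d_i$ times; each application of $D_i$ (in either of the two branches $D_i^\pm$) activates one of the $d_i$ coordinate directions of $\R^{d_i}$ while preserving the constancy of coordinates in the complementary summands. The matching between the multiplicities $d_i$ and the dimensions $\dim\R^{d_i}$ is then exactly tight, and the bookkeeping is parallel to the argument used for the specific reduced word in the proof of Theorem \ref{t.GZ} via Lemma \ref{l.GZ}.

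The second step is the character identity $T_{i_1}\cdots T_{i_d}\, e^{w_0\lambda} = \chi(V_\lambda)$. The operators $T_i$ are easily checked to satisfy $T_i^2 = T_i$ and, via a standard rank-two verification, the braid relations of $W$, so $T_w := T_{i_1}\cdots T_{i_d}$ depends only on $w\in W$. Rewriting
$$T_i f = \frac{s_if - t_i^{-1}f}{1 - t_i^{-1}}$$
exhibits $T_i$ as the Demazure--Lusztig operator attached to the \emph{opposite} Borel, for which the extremal weight of $V_\lambda$ is the lowest weight $w_0\lambda$. In this convention the Demazure character formula reads $T_{w_0}e^{w_0\lambda} = \chi(V_\lambda)$; Remark \ref{r.segment} essentially performs the $SL_2$ case, and the general identity then follows by iterating along the reduced word.

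The main obstacle will be the well-definedness check in the first step: the character identity is formal once the iteration makes sense, but at every stage one must exhibit at least one coordinate $x^{i_k}_j$ constant on each parapolytope summand of the current chain. Because the combinatorial matching is tight and both branches of the definition of $D_i$ must be handled simultaneously, the tracking of which coordinates remain constant (and which have been "filled in" by earlier $D_{i_\ell}$'s) has to be done carefully; however, it is entirely analogous to the inductive argument already carried out in Theorem \ref{t.GZ} for the type-$A$ decomposition, just transposed to the general reductive setting.
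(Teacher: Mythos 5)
Your proposal is correct and follows essentially the same route as the paper: the paper's proof simply cites Andersen's Demazure character formula for $\chi(V_\lambda)=T_{i_1}\cdots T_{i_d}e^{w_0\lambda}$ and then applies Theorem \ref{t.Demazure} inductively. The extra material you supply (the well-definedness bookkeeping via the tight matching between the multiplicity of $s_i$ in the reduced word and $\dim\R^{d_i}$, and the identification of $T_i$ with the lowest-weight form of the Demazure operator) is a reasonable elaboration of details the paper leaves implicit, not a different argument.
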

\begin{proof} By the Demazure character formula \cite{Andersen} we have
$$\chi(V_\lambda)=T_{i_1}\ldots T_{i_d}e^{w_0\l}.$$
This formula together with Theorem \ref{t.Demazure} implies by induction the desired statement.
\end{proof}

As a corollary, we get that $p_*(P_\l)$ is the weight polytope of $V_\l$ in $\R^r$.
Here $p_*$ denotes the push-forward of convex chains (see \cite[Proposition-Definition 2]{PKh}).

\begin{remark} A slight modification of Theorem \ref{t.Demazure} makes it applicable to all dominant weights (not only those inside the root lattice).
Namely, instead of the lattices $\Z^d\subset\R^d$ and $\Z^r\subset\R^r$ one should consider the shifted lattices $a_\l+\Z^d\subset\R^d$ and $\l+\Z^r\subset\R^r$, and define characters of polytopes with respect to these new lattices.
The convex chain $P_\l$ will be lattice with respect to the lattice $a_\l+\Z^d$.
\end{remark}

In the same way, we can construct convex chains that capture the characters of Demazure submodules of $V_\l$ for any element $w$ in the Weyl group and a reduced decomposition $w=s_{j_1}\ldots s_{j_\ell}$ (see Corollary \ref{c.Demazure}). In particular, if $s_{j_1}\ldots s_{j_\ell}$ is a terminal subword of $s_{i_1}s_{i_2}\cdots s_{i_d}$ (that is, $j_\ell=i_d$, $j_{\ell-1}=i_{d-1}$, etc.) then the corresponding convex chain is a face of $P_\l$.
It is interesting to check whether this convex chain is always a true polytope.
One way to do this would be to identify it with an Okounkov polytope of the Bott--Samelson resolution corresponding to the word $s_{j_1}\ldots s_{j_\ell}$ (see Conjecture \ref{conj}).

\subsection{Examples}
\paragraph{ $\bf Sp(4)$.} \label{e.Sp}Take $G=Sp(4)$ and $w_0=s_2s_1s_2s_1$ (here $\a_1$ denotes the shorter root and $\a_2$ denotes the longer one).
The corresponding string space of rank $2$ is $\R^4=\R^2\oplus\R^2$ together with $l_1=2(x_1^2+x_2^2)$ and $l_2=x_1^1+x_2^1$.
Let $\l=-p_1\a_1-p_2\a_2$ be a dominant weight, that is, $\l_1:=(p_2-p_1)\ge 0$ and $\l_2:=(p_1-2p_2)\ge 0$.
Choose a point $a_\l=(a,b,c,d)$ such that $(a+b)=p_1$ and $(c+d)=p_2$ (that is, $p(a_\l)=w_0\l=-\l$).
Label coordinates in $\R^4$ by $x:=x_1^1$, $y:=x_2^1$, $z:=x_1^2$ and $t:=x_2^2$.
Then the polytope $D_2D_1D_2D_1(a_\l)$ is given by inequalities
$$0\le x-a\le 2\l_1, \quad z-c\le x-a+\l_2, \quad y-b\le 2(z-c),$$
$$y-b\le z-c+\l_2,\quad 0\le t-d\le \l_2, \quad t-d\le \frac{y-b}{2}.$$
It is not hard to show that the polytopes $D_1D_2D_1D_2(a_\l)$ and $D_2D_1D_2D_1(a_\l)$ are the same up to a linear transformation of $\R^4$.
Each polytope has 11 vertices, hence, they are not combinatorially equivalent to string polytopes for $s_1s_2s_1s_2$ or $s_2s_1s_2s_1$ defined in \cite{L}.

\paragraph{$\bf SL(3)$.} \label{e.GK} Take $G=SL(3)$ and $w_0=s_1s_2s_1$.
The corresponding string space of rank $2$ coincides with the one from Section \ref{ss.examples}, namely, $\R^3=\R^2\oplus\R$, and  $l_1=x_1^2$, $l_2=x_1^1+x_2^1$.
If $a_\l=(b,c,c)$ where $-b-c\ge b\ge c$, then the polytope $D_1D_2D_1(a_\l)$ is the Gelfand--Zetlin polytope $Q_\l$ for $\l=(-b-c,b,c)$.
Label coordinates in $\R^3$ by $x:=x_1^1$, $y:=x_2^1$ and $z:=x_1^2$.
We now introduce a different structure of a string space in $\R^3$ by splitting $\R^2$, namely, $\R^3=\R^1\oplus\R^1\oplus\R^1$ with coordinates $\tilde x_1^1$, $\tilde x_1^2$, $\tilde x_1^3$
such that $\tilde x_1^1=x$, $\tilde x_1^2=z$, $\tilde x_1^3=y$.
Put $\tilde l_1=l_1-2y$, $\tilde l_2=l_2$ and $\tilde l_3=l_1-2x$.
It is easy to check that $\tilde D_2\tilde D_1(a_\l)=D_2D_1(a_\l)$ and deduce by arguments of Example \ref{e.trap_square} that the virtual polytope $\tilde D_3\tilde D_2\tilde D_1(a_\l)$ (see Figure 4) has the same character as the polytope $D_1D_2D_1(a_\l)$.
In particular, the image of $\tilde D_3\tilde D_2\tilde D_1(a_\l)$ under the projection $(x,y,z)\mapsto (x+y,z)$ coincides with the weight polytope of the irreducible representation of $SL_3$ with the highest weight $-c\a_1-(a+b)\a_2$ (provided that the latter is dominant, that is, $a+b-2c\ge0$ and $c-2(a+b)\ge0$).
The virtual polytope $\tilde D_3\tilde D_2\tilde D_1(a_\l)$ is a twisted cube of Grossberg--Karshon (cf. \cite[Figure 2]{GK}) given by the inequalities
$$a\le x\le c-2b-a, \quad c\le z\le x+b-c,
\quad b\le y\le -2x+z-b. \eqno{(GK)}
$$
Note that the last pair of inequalities is inconsistent when $b>-2x+z-b$, and should be interpreted in the sense of convex chains.
More precisely, $\tilde D_3\tilde D_2\tilde D_1(a_\l)$=$\I_{P}-\I_{Q}$, where $P$ is the convex polytope given by inequalities $(GK)$
and $Q$ is the set given by the inequalities
$$a\le x\le c-2b-a, \quad c\le z\le x+b-c,
\quad b> y> -2x+z-b. $$
(cf. \cite[Formula (2.21)]{GK}).
\begin{figure}\label{f.GK}
\includegraphics[width=10cm]{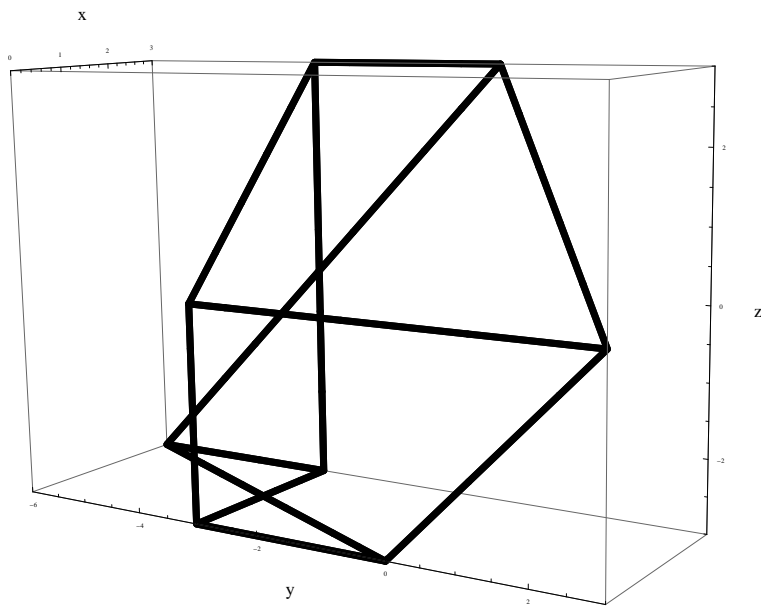}
\caption{Virtual polytope $\tilde D_3\tilde D_2\tilde D_1(a_\l)$ for $a_\l=(0,-3,-3)$}
\end{figure}
A generalization of this example will be given in Section \ref{ss.degen}.

\section{Bott towers and Bott--Samelson resolutions} \label{s.ag}
In this section, we outline possible algebro-geometric applications of the convex-geometric Demazure operators.
\subsection{Bott towers}\label{ss.Bott}
Let us recall the definition of a {\em Bott tower} (see \cite{GK} for more details).
It is a toric variety obtained from a point by iterating the following step.
Let $X$ be a toric variety, and $L$ a line bundle on $X$.
Define a new toric variety $Y:=\P(L\oplus \Oc_X)$ as the projectivization of the split rank
two vector bundle $L\oplus \Oc_X$ on $X$.
Consider a sequence of toric varieties
$$Y_0\leftarrow Y_1\leftarrow\ldots\leftarrow Y_d,$$
where $Y_0$ is a point, and $Y_i=\P(L_{i-1}\oplus\Oc_{Y_{i-1}})$ for a line bundle $L_{i-1}$
on $Y_{i-1}$.
In particular, $Y_1=\P^1$ and $Y_2=\P(\Oc_{\P^1}\oplus\Oc_{\P^1}(k))$ is a Hirzebruch surface.
We call $Y_d$ the {\em Bott tower} corresponding to the collection of line bundles
$(L_1,\ldots,L_{d-1})$.
Note that the collection $(L_1,\ldots,L_{d-1})$ depends on $\frac{d(d-1)}{2}$ integer parameters since
$\Pic(Y_i)=\Z^i$.
Recall that the Picard group of a toric variety of dimension $d$ can be identified with a group of virtual lattice polytopes in $\R^d$ in such a way that very ample line bundles get identified with their Newton polytopes.
One can describe the (possibly virtual) polytope $P(\mathcal L)$ of a given line bundle $\mathcal L$  on $Y_d$ using a suitable string space.

Consider a string space with $d=r$, that is, $d_1=\ldots=d_r=1$.
We have the decomposition
$${\mathbb R}^d=\underbrace{{\mathbb R}\oplus\ldots\oplus{\mathbb R}}_{d}.$$
Label coordinates in $\R^d$ as follows: $x^i_1:=y_i$ for $i=1$,\ldots, $d$.
Since we will be interested in the polytope $P:=D_1\ldots D_d(a)$, we can assume that the linear function $l_i$ for $i<d$ does not depend on
$y_1$, \ldots, $y_i$, and $l_d=y_1$.
Hence, the collection $(l_1,\ldots,l_{d-1})$ of linear functions
also depends on $\frac{d(d-1)}{2}$ parameters.

The projective bundle formula gives a natural basis ($\eta_1$, \ldots, $\eta_d$) in the Picard group of $Y_d$.
Namely, for $d=1$, the basis in $\Pic(\P^1)$ consists of the class of a point in $\P^1$.
We now proceed by induction.
Let ($\eta_1$, \ldots, $\eta_{i-1}$) be the basis in $\Pic(Y_{i-1})$ (we identify $\Pic(Y_{i-1})$ with its pull-back to $\Pic(Y_d)$).
Put $\eta_i=c_1(\Oc_{Y_i}(1))$ where $c_1(\Oc_{Y_i}(1))$ denotes the first Chern class of the tautological quotient line bundle $\Oc_{Y_i}(1)$ on $Y_i$.
Decompose $L_1$,\ldots, $L_{d-1}$ in the basis ($\eta_1$, \ldots, $\eta_d$):
$$L_1=a_{1,1}\eta_1, \quad
\ldots, \quad L_{d-1}=a_{d-1,1}\eta_1+\ldots+a_{d-1,d-1}\eta_{d-1}. \eqno(*)$$
Similarly, decompose $l_1$,\ldots, $l_{d-1}$ in the basis of coordinate functions $(y_1,\ldots,y_d)$:
$$l_1=b_{1,1}y_2+\ldots+b_{1,d-1}y_d, \quad \ldots, \quad
l_{d-1}=b_{d-1,d-1}y_d.
\eqno(**).$$

Let $Y_d$ be the Bott tower corresponding to the collection $(*)$ of line bundles, and $\R^d$ the string space corresponding to the collection $(**)$ of linear functions.
One can show (cf. \cite[Theorem 3]{GK}) that if $a_{i,j}=b_{j,i}$, then there exists $a_\mathcal L\in\R^d$ such that
$$P(\mathcal L)=D_1D_2\ldots D_d(a_\mathcal L).$$

In particular, when $\mathcal L$ is very ample the polytope $D_1D_2\ldots D_d(a_\mathcal L)$ coincides with the Newton polytope of the pair $(Y_d,\mathcal L)$.
Note that the intermediate polytopes $\{a_\mathcal L\}\subset D_d(a_\mathcal L)\subset D_{d-1}D_d(a_\mathcal L)\subset\ldots$ 
correspond to the flag of toric subvarieties $Z_0=\{pt\}\subset Z_1\subset\ldots\subset Z_d=Y_d$, where $Z_i=p_{d-i}^{-1}(Z_0)$ and $p_i$ is the projection $p_i:Y_d\to Y_i$.

\subsection{Bott--Samelson varieties.} \label{ss.BS}
\begin{figure}\label{f.BS}
\includegraphics[width=10cm]{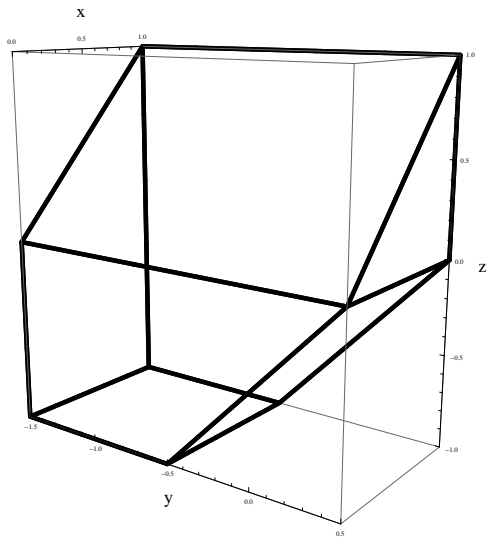}
\caption{Polytope $D_1 E_u D_2 D_1(a)$ for $a=(0,-1,-1)$ and $u=(0,-1/2,0)$}
\end{figure}

Similarly to Bott towers, Bott--Samelson varieties can be obtained by successive projectivizations of rank two vector bundles.
In general, these bundles are no longer split, so the resulting varieties are not toric.
In \cite{GK}, Bott--Samelson varieties were degenerated to Bott towers by changing complex structure (in particular, Bott--Samelson varieties are diffeomorphic to Bott towers when regarded as real manifolds).
Below we define these varieties using notation of Section \ref{ss.reductive}.

Fix a Borel subgroup $B\subset G$.
With every collection of simple roots $(\alpha_{i_1},\ldots,\alpha_{i_\ell})$, one can associate
a {\em Bott--Samelson variety} $R_{(i_1,\ldots,i_\ell)}$ and a map $R_{(i_1,\ldots,i_\ell)}\to G/B$ by the following inductive procedure.
Put $R_{\emptyset}=pt$.
For every $\ell$-tuple $I=(i_1,\ldots,i_\ell)$ denote by $I^j$ the $(\ell-1)$-tuple $(i_1,\ldots,\hat i_j,\ldots,i_\ell)$.
Define $R_{I}$ as  the fiber product $R_{I^\ell}\times_{G/P_{\ell}}G/B $, where $P_{i_\ell}$ is the minimal parabolic subgroup corresponding to the root $\a_{i_\ell}$.
The map  $r_{I}:R_{I}\to G/B$ is defined as the projection to
the second factor.
There is a natural embedding
$$R_{I^\ell}\hookrightarrow R_I;\quad x\mapsto (x,r_I(x)).$$
In particular, any subsequence $J\subset I$ yields
the embedding $R_J\hookrightarrow R_I$.

It follows from the projective bundle formula that the Picard group of $R_I$ is freely generated by the divisors $R_{I^1}$,\ldots, $R_{I^\ell}$.
Denote by $v$ the geometric valuation on $\C(R_I)$ defined by the flag $R_{\emptyset}\subset R_{(i_\ell)}\subset R_{(i_{\ell-1},i_\ell)}\subset\ldots\subset R_{I^1}\subset R_I$.
Let $\mathcal L$ be a line bundle on $R_I$, and $P_v(\mathcal L)$ its Okounkov body with respect to the valuation $v$.
Conjecturally, $P_v(\mathcal L)$ can be described using string spaces as follows.

Replace a reduced decomposition of $w_0$ in the definition of the string space from Section \ref{ss.reductive} by a sequence $(\alpha_{i_1},\ldots,\alpha_{i_\ell})$ that defines $R_I$ (we no longer require that $s_{i_1}\ldots s_{i_\ell}$ be reduced).
More precisely, let $d_i$ the number of $\a_{i_j}$ in this sequence such that $i_j=i$.
We get the following string space $\mathcal S_I$ of rank $\le r$  and dimension $\ell$:
$$\mathbb R^\ell=\mathbb R^{d_1}\oplus\ldots\oplus\mathbb R^{d_r},$$
where the functions $l_i$ are given by the formula:
$$l_i(x)=\sum_{k\ne i} (\alpha_k,\alpha_i)\sigma_k(x). \eqno(BS)$$
In particular, if $\ell=d$ and $s_{i_1}\cdots s_{i_\ell}$ is reduced then $R_I$ is a Bott--Samelson resolution of the flag variety $G/B$, and $\R^\ell$ is exactly the string space from Section \ref{ss.reductive}.
Denote by $E_u$ the parallel translation in the string space by a vector
$u\in{\mathbb R}^\ell$.
\begin{conj} \label{conj}
For every line bundle $\mathcal L$ on $R_I$, there exists a point $\mu\in\R^r$ and vectors
$u_1,\ldots,u_{\ell}\in\R^\ell$ such that we have
$$P(\mathcal L)=E_{u_1}D_{i_1}E_{u_2}D_{i_2}\ldots E_{u_\ell}D_{i_\ell}(a_\mu)$$
for any point $a_\mu\in\R^\ell$ that satisfies $p(a)=\mu$.
\end{conj}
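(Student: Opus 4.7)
The plan is to proceed by induction on the length $\ell$ of the sequence $I = (i_1, \ldots, i_\ell)$. The base case $\ell = 0$ is trivial: $R_\emptyset$ is a point, $P(\mathcal L)$ is a point, and the formula holds with $\mu = 0$.

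For the inductive step, observe that $R_{I^1}$ is the codimension-one member of the flag defining $v$ that lies just inside $R_I$. Therefore the first coordinate of $v$ records the order of vanishing along the divisor $R_{I^1}$, and the remaining $\ell - 1$ coordinates reproduce the valuation on $R_{I^1}$ attached to the analogous flag for the shorter sequence $I^1 = (i_2, \ldots, i_\ell)$. By the slicing principle for Newton--Okounkov bodies (Lazarsfeld--Mustata, Kaveh--Khovanskii), the fiber of $P(\mathcal L)$ at first-coordinate value $k$ equals the Okounkov body on $R_{I^1}$ of a line bundle built from $\mathcal L$ and twists by $\Oc(R_{I^1})$. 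Applying the induction hypothesis to $R_{I^1}$, whose sequence $I^1$ exactly matches the trailing operators $D_{i_2}, \ldots, D_{i_\ell}$ on the right-hand side, expresses each slice as $E_{u_2(k)} D_{i_2} \ldots E_{u_\ell(k)} D_{i_\ell}(a_{\mu(k)})$ in the string space $\Sc_{I^1}$.

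It remains to assemble these slices into a polytope of the promised form $E_{u_1} D_{i_1}(\cdot)$. By definition, $D_{i_1}$ extends its input parapolytope by one dimension along the next unused coordinate $x^{i_1}_j$, and hyperplane slices of the output perpendicular to this new coordinate are translates of the input; $E_{u_1}$ supplies a global shift. One then needs to verify that the slice data $(\mu(k), u_2(k), \ldots, u_\ell(k))$ depends affine-linearly on $k$ and that this dependence is precisely the one produced by a single application of $E_{u_1} D_{i_1}$ for suitably chosen $u_1$ and base point $a_\mu$. Once the induction hypothesis is in place, this is a linear-algebraic check governed by the defining formula $(BS)$ for the $l_i$ and the Picard structure of $R_I$.

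The main obstacle will be the arithmetic dictionary between the Picard data of $\mathcal L$ and the parameters $(\mu, u_1, \ldots, u_\ell)$. By the projective bundle formula $\Pic(R_I)$ is freely generated by $R_{I^1}, \ldots, R_{I^\ell}$, so the $\ell$ intersection-theoretic coordinates of $\mathcal L$ must be matched with the geometric parameters coming out of the induction, using the projection formula on the $\P^1$-bundle $R_I \to R_{I^\ell}$ together with the behavior of $\mathcal L$ under restriction $R_{I^1} \hookrightarrow R_I$. A further subtlety is that for general (non-nef) $\mathcal L$ the right-hand side is only a virtual polytope; one expects the equality to hold as convex chains in $\tilde V_{\square}$, with virtual cancellations analogous to those in Example \ref{e.paral_chain}, and to specialize to an equality of genuine polytopes when $\mathcal L$ is sufficiently positive.
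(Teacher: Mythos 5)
This statement is Conjecture \ref{conj}: the paper offers no proof of it, only the observation that it is consistent with the single example computed by Anderson for $SL_3$ and $I=(1,2,1)$. So there is no argument in the paper to compare yours against, and the question is whether your proposal actually closes the conjecture. It does not; it is a reasonable plan of attack (and probably the natural one, given that the flag defining $v$ restricts on $R_{I^1}$ to exactly the flag for $I^1$), but each of its three steps has a genuine gap.

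First, the slicing principle you invoke identifies the fiber of $P_v(\mathcal L)$ over a fixed value $k$ of the first valuation coordinate with the Okounkov body of the \emph{restricted} graded linear series, i.e.\ the image of $H^0(R_I,\mathcal L-kR_{I^1})$ in $H^0(R_{I^1},(\mathcal L-kR_{I^1})|_{R_{I^1}})$, not with the Okounkov body of a complete linear series on $R_{I^1}$. To apply the induction hypothesis you need these to agree, which requires surjectivity of the restriction maps (some cohomology vanishing on Bott--Samelson varieties); you do not address this, and for non-nef $\mathcal L$ it is not clear it holds. Second, your description of $D_{i_1}$ is incorrect: hyperplane slices of $D_{i_1}(P)$ perpendicular to the new coordinate $x^{i_1}_j$ are \emph{not} translates of $P$. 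Already in the two-dimensional example of Section \ref{ss.examples}, $D_2$ of a segment is a trapezoid whose slices perpendicular to $y$ shrink as $y$ grows, because the amount of expansion of each fiber $P\cap(c+\R^{d_{i_1}})$ depends on $c$ through $l_{i_1}(c)$. Consequently the assembly step cannot be the statement you wrote; what must actually be verified is that the $c$-dependent expansion law built into $D_{i_1}$ reproduces the $k$-dependence of the slice data, and this -- your ``linear-algebraic check'' -- is precisely the content of the conjecture, not a routine verification. Third, even granting both points, one needs the relevant Okounkov bodies to be polytopes (or the identity to be interpreted in the space of convex chains, as you suggest at the end), and the dictionary between $\Pic(R_I)=\Z^{\ell}$ and the $\ell+r$ parameters $(\mu,u_1,\dots,u_\ell)$ is asserted rather than constructed. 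In short: the strategy is sound in outline, but the conjecture remains open after your argument.
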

In particular, if $\mathcal L=r_I^*\mathcal L(\l)$, where $\mathcal L(\l)$ is the line bundle on $G/B$ corresponding to the dominant weight $\l$, then one can take $u_2=\ldots=u_\ell=0$ and $\mu=\lambda$.
This conjecture agrees with the example computed in \cite[Section 6.4]{Anderson} for $SL_3$ and the Bott--Samelson resolution $R_{(1,2,1)}$ (cf. Figure 5 and Figure 3(b) in loc.cit.).
Figure 5 shows the polytope $D_1 E_u D_2 D_1(a)$ for the string space $(BS)$ when $G=SL_3$ and $I=(1,2,1)$.

\subsection{Degenerations of string spaces} \label{ss.degen}
While twisted cubes of Grossberg--Karshon for $GL_n$ and Gelfand--Zetlin polytopes have different combinatorics they produce the same Demazure characters.
We now reproduce this phenomenon for general string spaces.
In particular, we transform the string space $(BS)$ from Section \ref{ss.BS}  into a string space from Section \ref{ss.Bott}.

Let $\mathcal S$ be a  string space $\R^d=\R^{d_1}\oplus\ldots\oplus\R^{d_r}$ with functions $l_1$,\ldots, $l_r$.
Suppose that $d_i>1$.

\begin{defin} The {\em $i$-th degeneration} of the string space $\mathcal S$ is the string space
$$\R^d=\R^{d_1}\oplus\ldots\oplus\underbrace{\R^{d_i-1}\oplus\R^{1}}_{\R^{d_i}}\oplus\ldots\oplus\R^{d_r}$$
of rank $(r+1)$ with functions $l_1$,\ldots, $l_i'$, $l_i''$,\ldots, $l_r$, where  $$l_i'(x)=l_i(x)-2x^i_{d_i}; \quad l_i''(x)=l_i(x)-2\sum_{k=1}^{d_i-1} x_k^i.$$
\end{defin}

\begin{example} The string space $\R\oplus\R\oplus\R$ from Example \ref{e.GK} is the 1-st degeneration of the space $\R^2\oplus\R$ with $l_1=x_1^1+x_2^1$, $l_2=x_1^2$.
\end{example}

Define the projection $p_i:\R^{r+1}\to \R^r$ by sending $(y_1,\ldots,y_i',y_i'',\ldots,y_r)$ to $(y_1,\ldots,y_i'+y_i'',\ldots,y_r)$.
This projection induces a homomorphism of group algebras of the lattices $\Z^{r+1}$ and $\Z^r$, which we will also denote by $p_i$.
It is easy to check that
$$T_i\circ p_i=p_i\circ T_i'=p_i\circ T_i''.$$
Combining this observation with Theorem \ref{t.Demazure}, we get the following proposition.
\begin{prop} \label{p.degen}
For any polytope $P\subset \R^d$ such that $\dim P\cap\R^{d_i}<d_i$, we have
$$\chi(D_i(P))=p_i(\chi(D_i'(P)))=p_i(\chi(D_i''(P))).$$
\end{prop}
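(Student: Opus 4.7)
The plan is to reduce the proposition to Theorem \ref{t.Demazure}, applied once in the string space $\mathcal S$ and once in its $i$-th degeneration, and then to combine the two instances using the intertwining identity $T_i\circ p_i = p_i\circ T_i' = p_i\circ T_i''$ that is stated immediately before the proposition.

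First I would record a compatibility between the character maps of the two string spaces: for any lattice polytope $Q\subset\R^d$, the character of $Q$ computed in the degenerated string space (a Laurent polynomial in the enlarged variable set where $t_i$ is split into $t_i'$ and $t_i''$) maps under $p_i$ to the character of $Q$ in $\mathcal S$. This is immediate term by term, since a lattice point $x\in Q$ contributes the monomial with exponent vector $(\sigma_1(x),\ldots,\sigma_i'(x),\sigma_i''(x),\ldots,\sigma_r(x))$, and the relation $\sigma_i'(x)+\sigma_i''(x)=\sigma_i(x)$ collapses this to the $\mathcal S$-weight $p(x)$ upon applying $p_i$.

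Granting that all three operators $D_i$, $D_i'$, $D_i''$ are applicable to $P$, Theorem \ref{t.Demazure} gives $\chi(D_i(P)) = T_i\chi(P)$ in $\mathcal S$, and $\chi(D_i'(P)) = T_i'\chi(P)$, $\chi(D_i''(P)) = T_i''\chi(P)$ in the degeneration. Combining these with the preceding compatibility and the intertwining identity, one computes
$$p_i(\chi(D_i'(P))) = p_i(T_i'\chi(P)) = T_i\,p_i(\chi(P)) = T_i\chi(P) = \chi(D_i(P)),$$
and the chain for $D_i''$ follows identically from $T_i\circ p_i = p_i\circ T_i''$.

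The one step that requires genuine care is verifying that both $D_i'$ and $D_i''$ are defined on $P$: $D_i'$ expands in some coordinate $x^i_j$ with $j<d_i$, so $P$ must lie in a hyperplane $\{x^i_j=\mathrm{const}\}$ for such a $j$, while $D_i''$ expands in $x^i_{d_i}$, so $P$ must also lie in $\{x^i_{d_i}=\mathrm{const}\}$. This is where the hypothesis $\dim P\cap\R^{d_i}<d_i$ has to be exploited, using the parapolytope structure of $P$ to promote the fiberwise degeneracy of each $P\cap(c+\R^{d_i})$ to globally flat coordinate directions in the two required cases. Once this geometric check is in place, the rest of the argument is the formal algebraic combination displayed above.
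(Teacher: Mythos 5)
Your proposal is correct and follows essentially the same route as the paper, whose entire proof consists of combining the intertwining identity $T_i\circ p_i=p_i\circ T_i'=p_i\circ T_i''$ with Theorem \ref{t.Demazure}; you supply the same two ingredients together with the (correct, term-by-term) compatibility $p_i(\chi_{\mathrm{degen}}(Q))=\chi_{\mathcal S}(Q)$. The definedness issue you flag at the end is real but is glossed over in the paper as well, so your treatment is, if anything, slightly more careful than the original.
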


We now degenerate successively  the string spaces from Section \ref{ss.BS}.
Let $I=(\alpha_{i_1},\ldots,\alpha_{i_\ell})$ be a sequence of simple roots, and
$$\mathbb R^\ell=\mathbb R^{d_1}\oplus\ldots\oplus\mathbb R^{d_r}$$
is the corresponding string space $\mathcal S_I$ with the functions $l_1$,\ldots, $l_r$ given by $(BS)$.

Let $\tilde{\mathcal S_I}$ be the string space of rank $\ell$ obtained from $S_I$ by $(d_1-1)$ first degenerations, $(d_2-1)$ second degenerations etc., that is,
$$\tilde{\mathcal S_I}=\underbrace{\R^{(1)}_{1}\oplus\ldots\oplus\R^{(1)}_{d_1}}_{\R^{d_1}}\oplus\ldots\oplus
\underbrace{\R^{(\ell)}_{1}\oplus\ldots\oplus\R^{(\ell)}_{d_\ell}}_{\R^{d_\ell}},$$
where the functions $l^{(i)}_j$ are given by the formula:
$$l^{(i)}_j(x)=l_i(x)-2\sum_{k\ne j} x^i_k.$$
Denote the Demazure operators associated with $\tilde{\mathcal S_I}$ by $\tilde D_{j}^{(i)}$.

For a point $a\in\R^\ell$, consider the convex chain $P_{I}=D_{i_1}\ldots D_{i_\ell}(a)$.
For every $k=1$,\ldots, $r$, we now replace the rightmost $D_k$ in the expression $D_{i_1}\ldots D_{i_\ell}$ by $D_{1}^{(k)}$, the next one by $D^{(k)}_{2}$,\ldots, the leftmost by $D^{(k)}_{i_k}$.
Denote the resulting convex chain by $\tilde P_I$.

\begin{example} If $r=3$, $\ell=6$ and $I=(\a_1,\a_2,\a_1,\a_3,\a_2,\a_1)$, then
$$P_I=D_1D_2D_1D_3D_2D_1, \quad \tilde P_I=\tilde D_{3}^{(1)}\tilde D_{2}^{(2)}\tilde D_{2}^{(1)}\tilde D_{1}^{(3)}\tilde D_{1}^{(2)}\tilde D_{1}^{(1)}(a).$$
\end{example}

Proposition \ref{p.degen} implies that $P_I$ and $\tilde P_I$
have the same character (with respect to the map $p:x\mapsto\sigma_1(x)\alpha_1+\ldots+\sigma_r(x)\alpha_r$).

\begin{cor}\label{c.Demazure}
If $p(a)$ is dominant and $s_{i_1}\cdots s_{i_\ell}$ is reduced, then the corresponding Demazure character coincides with $\chi(P_I)=\chi(\tilde P_I)$.
\end{cor}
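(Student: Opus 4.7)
The statement asserts two equalities which I would handle separately. For $\chi(P_I)$ equal to the Demazure character, I would iterate Theorem~\ref{t.Demazure}. Since $\chi(\I_{\{a\}}) = e^{p(a)}$ and each $D_{i_k}$ is intertwined with $T_{i_k}$ by $\chi$, an $\ell$-fold induction gives
$$\chi(P_I) = T_{i_1} T_{i_2} \cdots T_{i_\ell}\, e^{p(a)}.$$
Under the hypotheses that $p(a)$ is dominant and $s_{i_1}\cdots s_{i_\ell}$ is reduced, the classical Demazure character formula (invoked already in the proof of Theorem~\ref{t.Weyl}) identifies this right-hand side with the Demazure character associated with the given reduced word and the weight $p(a)$.

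To prove the remaining equality $\chi(P_I) = \chi(\tilde P_I)$, I would iterate Proposition~\ref{p.degen}, performing a single partial degeneration at a time. Pick some $i$ with $d_i > 1$ and replace the rightmost (i.e., earliest applied) occurrence of $D_i$ in the product $D_{i_1}\cdots D_{i_\ell}$ by the operator $D_i''$ in the once-degenerated string space $\mathcal{S}^{(i)}$. Proposition~\ref{p.degen} then equates the character of $D_i$ applied to the relevant intermediate polytope with the $p_i$-push-forward of the character of $D_i''$ applied to it. To propagate this identity through the operators $D_j$ with $j \neq i$ that are applied subsequently (i.e., to the left of the replaced operator), I would use that $p_i$ commutes with $T_j$ for $j \neq i$ --- which holds because the reflection $s_j$ leaves $y_i$ unchanged --- together with the relation $T_i \circ p_i = p_i \circ T_i'' = p_i \circ T_i'$ already recorded in the paper. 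Iterating this procedure $(d_i - 1)$ times for each $i = 1, \ldots, r$, and choosing the rightmost-first replacement order, transforms $P_I$ into $\tilde P_I$ while preserving the character with respect to the weight map $p$.

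The main technical nuisance, and what I expect to be the only real obstacle, is verifying the hypothesis $\dim P \cap \R^{d_i} < d_i$ of Proposition~\ref{p.degen} at each degeneration step. This is essentially bookkeeping: when we replace the $k$-th occurrence of $D_i$ counted from the right, the polytope to which this operator is applied has been acted upon by only $k-1$ earlier operators of type $D_i$, so its width in the $\R^{d_i}$ direction is at most $k - 1 < d_i$. This is precisely what the rightmost-first replacement order buys, and once this dimension count is in place the rest of the argument is purely formal.
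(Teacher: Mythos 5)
Your proposal is correct and follows essentially the same route as the paper: the identification of $\chi(P_I)$ with the Demazure character is exactly the argument of Theorem~\ref{t.Weyl} (induction on Theorem~\ref{t.Demazure} plus the Demazure character formula), and the equality $\chi(P_I)=\chi(\tilde P_I)$ is obtained, as in the text preceding the corollary, by iterating Proposition~\ref{p.degen} via the relations $T_i\circ p_i=p_i\circ T_i'=p_i\circ T_i''$ and the commutation of $p_i$ with $T_j$ for $j\ne i$. Your explicit verification of the hypothesis $\dim P\cap\R^{d_i}<d_i$ at each degeneration step is a detail the paper leaves implicit, but it does not change the argument.
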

The proof is completely analogous to the proof of Theorem \ref{t.Weyl}.

\begin{remark}\label{r.GK}
Note that the convex chain $\tilde P_I$ coincides with the twisted cube constructed in \cite{GK} for the corresponding Bott--Samelson resolution.
Indeed, $(\a_i,\a_i)=-2$ according to our definition of the function $(\cdot,\a_i)$ (see Section \ref{ss.reductive}), hence, we can write
$$l^{(i)}_j(x)=\sum_{(p,q)\ne (i,j)}(\alpha_p,\alpha_i)x^p_q.$$
It is now easy to check that the defining inequalities for $\tilde P_I$ coincide with the inequalities given by \cite[Formula (2.21)]{GK} together with computations of \cite[Section 3.7]{GK}.
\end{remark}

The string space $\mathcal S_I$ and its complete degeneration $\tilde{\mathcal S_I}$ are two extreme cases that yield convex chains for given Demazure characters.
By taking partial degenerations of $\mathcal S_I$ one can construct intermediate convex chains with the same character.
However, only $\mathcal S_I$ might produce true polytopes (such as Gelfand--Zetlin polytopes for $G=SL_n$ or polytope of Example \ref{e.Sp} for $G=Sp_4$) that represent the Weyl characters.
Indeed, such a polytope must have a face $D_i^2(a)$ (in the case of $\mathcal S_I$) or a face $D_i'D_i''(a)$ (in the case of the $i$-th degeneration of $\mathcal S_I$).
The former is a true segment since $D_i^2=D_i$, while the latter is necessarily a virtual trapezoid with the same character due to cancelations (cf. Figure 3).

\end{document}